\documentclass[12pt,a4]{amsart}
\usepackage{amsthm,amsmath,amssymb}
\setcounter{tocdepth}{2}
\usepackage{titlesec}
\usepackage{mathrsfs}
\makeatletter
\def\section{\@startsection{section}{1}%
  \z@{.7\linespacing\@plus\linespacing}{.5\linespacing}%
  {\normalfont\scshape\centering}}
\def\subsection{\@startsection{subsection}{2}%
  \z@{.5\linespacing\@plus.7\linespacing}{-.5em}%
  {\normalfont\bfseries}}
\makeatother
\titleformat*{\section}{\large\bfseries}
\titleformat*{\subsection}{\large\bfseries}
\newtheorem{theorem}{Theorem}[section]

\newtheorem{lemma}{Lemma}[section]
\newtheorem{prop}{Proposition}[section]

\theoremstyle{remark}
\newtheorem{rem}{Remark}

\title
{On a generalisation of the Riemann $\xi$-function}
\author{Hirotaka Kobayashi}
\date{}
\address{Graduate School of Mathematics, Nagoya University, Furocho, Chikusaku, Nagoya 464-8602, Japan}
\email{m17011z@math.nagoya-u.ac.jp}
\begin{document}

\maketitle

\begin{abstract}
It is known that we can construct the meromorphic function $Z_k(s)$ associated with the higher derivative of Hardy's $Z$-function.
In this paper, we introduce the entire function derived from $Z_k(s)$, a generalisation of the Riemann $\xi$-function and prove some properties.
\end{abstract}

\section{Introduction}
Hardy's $Z$-function is defined by
\begin{equation*}
Z(t)=\chi\left(\frac{1}{2}+it \right)^{-\frac{1}{2}}\zeta\left(\frac{1}{2}+it\right) \quad (t \in \mathbb{R}),
\end{equation*}
where $\chi(s)=2^s\pi^{s-1}\sin(\pi s/2)\Gamma(1-s)$ which comes from the functional equation for $\zeta(s)$.
When we take the higher derivative of $Z(t)$, it is possible to construct a meromorphic function $Z_k(s)$ which satisfies
\begin{equation*}
\left|Z^{(k)}(t) \right|=\left|Z_k\left(\frac{1}{2}+it \right) \right|.
\end{equation*}

This function has some similar properties to those of $\zeta(s)$.
Matsumoto and Tanigawa \cite{M-T} studied the zeros of $Z_k(s)$.
Later, Matsuoka \cite{M} considered the relation between the zero of $Z^{(k)}(t)$ and that of $Z^{(k+1)}(t)$.
They obtained some properties of $Z_k(s)$.
In later sections we will present their results. Especially, we reproduce the proofs of Matsuoka's results because his preprint \cite{M} is unpublished.

In this article, we consider $Z_k(s)$ rather than $Z^{(k)}(t)$. Actually, we generalise the Riemann $\xi$-function to define
\begin{equation*}
\xi_k(s):=\pi^{-\frac{s}{2}}s(s-1)\frac{Z_k(s)}{\Gamma(\frac{s}{2})^{k-1}\Gamma(\frac{1-s}{2})^k},
\end{equation*}
and give some properties of $\xi_k(s)$.
When $k=0$, this coincides with the Riemann $\xi$-function. We may expect that $\xi_k(s)$ is a natural generalisation of the Riemann $\xi$-function because the facts on $\xi_k(s)$ proved in this note are valid for $k=0$.
We will prove that this function is entire in Section 3.
For entire functions, the concept of the order is defined. The order of $\xi_k(s)$ is $1$. Thus we can factorize this function by Hadamard's factorization theorem and we can determine some constants appear in the factorized form.
We show these facts in Section 5.
In Section 6, we give the special values of $\xi_k(s)$ at positive integers. As we see in Section 2, $\xi_k(s)$ has the functional equation. It implies that we can obtain the special values at all integers.
Finally, we show Matsuoka's theorem, or more precisely, Mozer's formula by a different way from Matsuoka's proof. The factorization of $\xi_k(s)$ leads to this different way. This method is a generalisation of the method by which Edwards (\cite{Ed}) and Mozer (\cite{Mo}) (maybe independently) proved the special case of Matsuoka's result.

\begin{rem}
Actually, Matsumoto and Tanigawa considered the function denoted by $\eta_k(s)$ in their notation. However, this function is essentially the same as $Z_k(s)$.
In fact, we can show that $-\frac{1}{2}\omega(s)\eta_k(s)=Z_k(s)$.
\end{rem}

\section{The definition and basic properties of $Z_k(s)$}
Let $Z_0(s)=\zeta(s)$, and according to Matsuoka \cite{M} we define $Z_k(s)$ for $k\geq 1$ by
\begin{equation}\label{def}
Z_{k+1}(s)=Z_k'(s)-\frac{1}{2}\omega(s)Z_k(s) \quad (k\geq 0),
\end{equation}
where
\begin{equation*}
\omega(s):=\frac{\chi'}{\chi}(s)=\log 2\pi-\frac{\Gamma'}{\Gamma}(s)+\frac{\pi}{2}\tan\frac{\pi s}{2}.
\end{equation*}

\begin{rem}
The function $Z_k(s)$ was already introduced in 1990 by Y\i ld\i r\i m \cite{Y} in the following alternative way:
\begin{equation}\label{Y's-def}
Z_k(s):=(\chi(s))^{\frac{1}{2}}\frac{d^k}{ds^k}\left((\chi(s))^{-\frac{1}{2}}\zeta(s) \right).
\end{equation}
We can see that this definition coincides with Matsuoka's.
\end{rem}

We can see that
\begin{equation*}
\omega(s)=\omega(1-s).
\end{equation*}

When we put
\begin{equation*}
\theta(t):=\arg \Gamma \left(\frac{1}{4}+\frac{it}{2} \right)-\frac{t}{2}\log \pi
\end{equation*}

then we see that $\chi(1/2+it)=e^{-2i\theta(t)}$, hence 
\begin{equation}\label{omegatheta}
\omega \left(\frac{1}{2}+it \right)=-2\theta'(t).
\end{equation}

\begin{prop}[Proposition 2.1 in \cite{M}]
For any non-negative $k$, we have
\begin{equation}\label{Z^k}
Z^{(k)}(t)=i^{k}Z_k\left(\frac{1}{2}+it \right)e^{i\theta(t)}.
\end{equation}
\end{prop}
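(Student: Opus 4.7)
The natural approach is induction on $k$, using the recursive definition \eqref{def} of $Z_{k+1}$ in terms of $Z_k$ and the relation \eqref{omegatheta} tying $\omega(1/2+it)$ to $\theta'(t)$.

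For the base case $k=0$, I would start from the definition $Z(t)=\chi(1/2+it)^{-1/2}\zeta(1/2+it)$ and use the identity $\chi(1/2+it)=e^{-2i\theta(t)}$ stated just before the proposition. This gives $\chi(1/2+it)^{-1/2}=e^{i\theta(t)}$ (on the appropriate branch), hence
\begin{equation*}
Z(t)=e^{i\theta(t)}\zeta\!\left(\tfrac{1}{2}+it\right)=i^{0}\,Z_{0}\!\left(\tfrac{1}{2}+it\right)e^{i\theta(t)},
\end{equation*}
which is \eqref{Z^k} for $k=0$.

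For the inductive step, assume $Z^{(k)}(t)=i^{k}Z_{k}(1/2+it)e^{i\theta(t)}$. Differentiating both sides in $t$ by the product rule and chain rule (noting that $\frac{d}{dt}Z_{k}(1/2+it)=iZ_{k}'(1/2+it)$) produces
\begin{equation*}
Z^{(k+1)}(t)=i^{k+1}\Bigl[Z_{k}'\!\left(\tfrac{1}{2}+it\right)+\theta'(t)\,Z_{k}\!\left(\tfrac{1}{2}+it\right)\Bigr]e^{i\theta(t)}.
\end{equation*}
Now I would invoke \eqref{omegatheta} in the form $\theta'(t)=-\tfrac{1}{2}\omega(1/2+it)$ and recognise that the bracket becomes exactly $Z_{k}'(s)-\tfrac{1}{2}\omega(s)Z_{k}(s)$ evaluated at $s=1/2+it$, which equals $Z_{k+1}(1/2+it)$ by \eqref{def}. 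This completes the induction.

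There is no real obstacle here; the proof is essentially bookkeeping. The only point requiring a moment of care is the factor of $i$ arising from $\frac{d}{dt}=i\frac{d}{ds}$ along the line $s=1/2+it$, which accounts for the power $i^{k}$ climbing to $i^{k+1}$ at each step, and the correct choice of branch for $\chi(1/2+it)^{-1/2}=e^{i\theta(t)}$ in the base case (this is the standard convention behind the definition of $Z(t)$, and matches the formula $|Z^{(k)}(t)|=|Z_{k}(1/2+it)|$ quoted in the introduction).
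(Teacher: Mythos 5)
Your proof is correct and follows essentially the same route as the paper: induction on $k$, with the base case being the definition of $Z(t)$ via $\chi(1/2+it)=e^{-2i\theta(t)}$, and the inductive step obtained by differentiating and combining \eqref{def} with \eqref{omegatheta}. The paper's proof is just a terser version of the same computation.
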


\begin{proof}
The case $k=0$ is the definition of $Z(t)$.
If we assume that the equation is true for $k$, then 
\begin{equation}
Z^{(k+1)}(t)=i^{k+1}e^{i\theta(t)}\left(Z_k'\left(\frac{1}{2}+it\right)+\theta'(t)Z_k\left(\frac{1}{2}+it \right)\right).
\end{equation}
By $(\ref{def})$ and $(\ref{omegatheta})$, we find that the equation is true for $k+1$.
\end{proof}

\begin{prop}[The Functional Equation, Proposition 2.2 in \cite{M}]
For any non-negative $k$, we have
\begin{equation}\label{f-e}
\chi(s)Z_k(1-s)=(-1)^k Z_k(s).
\end{equation}
\end{prop}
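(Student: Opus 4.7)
The natural approach is induction on $k$, exploiting the recursive definition (\ref{def}), the symmetry $\omega(s)=\omega(1-s)$, and the identity $\chi'(s)=\omega(s)\chi(s)$ (which follows from $\omega=\chi'/\chi$).

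For the base case $k=0$, the assertion $\chi(s)\zeta(1-s)=\zeta(s)$ is just the classical functional equation of $\zeta$, rewritten. So the plan is to carry out the inductive step: assuming $\chi(s)Z_k(1-s)=(-1)^k Z_k(s)$, deduce $\chi(s)Z_{k+1}(1-s)=(-1)^{k+1}Z_{k+1}(s)$.

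The first step is to differentiate the induction hypothesis in $s$. On the left this yields $\chi'(s)Z_k(1-s)-\chi(s)Z_k'(1-s)$, where $Z_k'(1-s)$ means the derivative of $Z_k$ evaluated at $1-s$ (picking up a sign from the chain rule on the outer $(1-s)$ argument). Using $\chi'=\omega\chi$ together with the induction hypothesis, this rearranges into a formula for $\chi(s)Z_k'(1-s)$ in terms of $Z_k(s)$ and $Z_k'(s)$. Next, expand $Z_{k+1}(1-s)$ by substituting $1-s$ into the defining recursion (\ref{def}): because of $\omega(1-s)=\omega(s)$ we get
\begin{equation*}
\chi(s)Z_{k+1}(1-s)=\chi(s)Z_k'(1-s)-\tfrac{1}{2}\omega(s)\,\chi(s)Z_k(1-s).
\end{equation*}
Substituting the two expressions produced by the induction hypothesis (directly, and after differentiation) into the right-hand side, the $\omega(s)Z_k(s)$ contributions combine to produce exactly $-\tfrac{1}{2}\omega(s)Z_k(s)$, and the result is $(-1)^{k+1}\bigl(Z_k'(s)-\tfrac{1}{2}\omega(s)Z_k(s)\bigr)=(-1)^{k+1}Z_{k+1}(s)$, as required.

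The only real obstacle here is sign bookkeeping: one must carefully distinguish $\tfrac{d}{ds}[Z_k(1-s)]=-Z_k'(1-s)$ from $Z_k'(1-s)$, and track the cumulative factor of $(-1)^k$ through the differentiation. No analytic input beyond $\chi'=\omega\chi$ and $\omega(s)=\omega(1-s)$ is needed, so the argument is essentially a short algebraic manipulation once the induction is set up.
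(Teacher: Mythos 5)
Your proposal is correct and follows essentially the same route as the paper: induction on $k$, expanding $Z_{k+1}(1-s)$ via the recursion, differentiating the induction hypothesis to express $\chi(s)Z_k'(1-s)$, and using $\chi'=\omega\chi$ together with $\omega(s)=\omega(1-s)$ to collect the terms into $(-1)^{k+1}Z_{k+1}(s)$. The sign bookkeeping you describe works out exactly as in the paper's computation, so nothing further is needed.
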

\begin{proof}
The case $k=0$ is the functional equation of the Riemann $\zeta$-function.
If we assume that the equation is true for $k$, then by the definition,
\begin{align*}
\chi(s)Z_{k+1}(1-s) &= \chi(s)\left(Z_{k}'(1-s)-\frac{1}{2}\omega(1-s)Z_k(1-s) \right) \\
&= \chi'(s)Z_k(1-s)-(-1)^{k}Z_k'(s)-\frac{(-1)^k}{2}\omega(s)Z_k(s) \\
&= (-1)^{k+1}Z_k'(s)+(-1)^k\omega(s)Z_k(s)-\frac{(-1)^k}{2}\omega(s)Z_k(s) \\
&= (-1)^{k+1}\left(Z_k'(s)-\frac{1}{2}\omega(s)Z_k(s) \right) \\
&= (-1)^{k+1}Z_{k+1}(s).
\end{align*}
The proof is completed.
\end{proof}

Therefore we can obtain the following theorem.
\begin{theorem}
For all $s$, we have
\begin{equation}\label{fcte-xi}
\xi_k(s)=(-1)^k\xi_k(1-s).
\end{equation}
\end{theorem}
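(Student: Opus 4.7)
The plan is to substitute $1-s$ into the definition of $\xi_k$ and then use Proposition 2.2 (the functional equation \eqref{f-e}) together with a classical identity for $\chi(s)$ in terms of gamma factors. Writing out
\[
\xi_k(1-s)=\pi^{-(1-s)/2}(1-s)(-s)\frac{Z_k(1-s)}{\Gamma((1-s)/2)^{k-1}\Gamma(s/2)^{k}},
\]
I notice two immediate simplifications: first, $(1-s)(-s)=s(s-1)$, so the prefactor $s(s-1)$ appears cleanly; second, the exponents on $\Gamma(s/2)$ and $\Gamma((1-s)/2)$ have swapped between $\xi_k(s)$ and $\xi_k(1-s)$.

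Next I would replace $Z_k(1-s)$ by $(-1)^k Z_k(s)/\chi(s)$ using \eqref{f-e}. This reduces the claim to verifying the identity
\[
\frac{1}{\chi(s)}\cdot\frac{\pi^{-(1-s)/2}}{\Gamma((1-s)/2)^{k-1}\Gamma(s/2)^{k}}
=\frac{\pi^{-s/2}}{\Gamma(s/2)^{k-1}\Gamma((1-s)/2)^{k}},
\]
which, after cancelling common powers of $\Gamma(s/2)^{k-1}\Gamma((1-s)/2)^{k-1}$, is equivalent to
\[
\chi(s)=\pi^{s-1/2}\frac{\Gamma((1-s)/2)}{\Gamma(s/2)}.
\]

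This last identity is the classical alternative form of $\chi(s)$, obtained from the symmetric functional equation $\pi^{-s/2}\Gamma(s/2)\zeta(s)=\pi^{-(1-s)/2}\Gamma((1-s)/2)\zeta(1-s)$, which is equivalent to $\zeta(s)=\chi(s)\zeta(1-s)$ with $\chi$ as defined by the paper. One may verify it directly from the given $\chi(s)=2^s\pi^{s-1}\sin(\pi s/2)\Gamma(1-s)$ by applying the Legendre duplication formula $\Gamma(1-s)=\pi^{-1/2}2^{-s}\Gamma((1-s)/2)\Gamma(1-s/2)$ and the reflection formula $\sin(\pi s/2)\Gamma(1-s/2)=\pi/\Gamma(s/2)$. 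I expect this gamma-function bookkeeping to be the only non-trivial step; once the identity is in hand, putting everything together gives $\xi_k(1-s)=(-1)^k\xi_k(s)$, hence \eqref{fcte-xi}.
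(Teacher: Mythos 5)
Your proposal is correct and follows essentially the same route as the paper: substitute the functional equation $\chi(s)Z_k(1-s)=(-1)^kZ_k(s)$ into the definition of $\xi_k$ and use the identity $\chi(s)=\pi^{s-\frac{1}{2}}\Gamma\left(\frac{1-s}{2}\right)/\Gamma\left(\frac{s}{2}\right)$, which the paper simply quotes and you (correctly) verify via the duplication and reflection formulas. The gamma-factor bookkeeping in your reduction is accurate, so nothing is missing.
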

\begin{proof}
We can transform $\chi(s)$ to
\begin{equation*}
\chi(s)=\frac{\Gamma (\frac{1-s}{2})}{\Gamma(\frac{s}{2})}\pi^{s-\frac{1}{2}}.
\end{equation*}
Thus
\begin{align*}
\xi_k(s)
&=
\pi^{-\frac{s}{2}}s(s-1)\frac{Z_k(s)}{\Gamma(\frac{s}{2})^{k-1}\Gamma(\frac{1-s}{2})^k} \\
&=
\pi^{-\frac{s}{2}}s(s-1)\frac{(-1)^k\chi(s)Z_k(1-s)}{\Gamma(\frac{s}{2})^{k-1}\Gamma(\frac{1-s}{2})^k} \\
&=
(-1)^k\pi^{-\frac{1-s}{2}}s(s-1)\frac{Z_k(1-s)}{\Gamma(\frac{1-s}{2})^{k-1}\Gamma(\frac{s}{2})^k} \\
&=
(-1)^k\xi_k(1-s).
\end{align*}
\end{proof}

Let $f_0(s)=1$, and define $f_k(s)$ for $k\geq 1$ by
\begin{equation}
f_{k+1}(s)=f_k'(s)-\frac{1}{2}\omega(s)f_k(s) \quad (k\geq 1).
\end{equation}

The following proposition is inspired by  Proposition 3 in \cite{M-T}.
\begin{prop}\label{exZ_k}
For any non-negative $k$, we have
\begin{equation}
Z_k(s)=\sum_{j=0}^{k}\binom{k}{j}f_{k-j}(s)\zeta^{(j)}(s).
\end{equation}
\end{prop}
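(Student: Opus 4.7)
The plan is to prove the formula by induction on $k$, mirroring the structure of Leibniz's rule. The base case $k=0$ is immediate: the right-hand side collapses to $f_0(s)\zeta(s) = \zeta(s) = Z_0(s)$.

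For the inductive step, I would assume the identity for $k$ and plug it into the defining recursion $Z_{k+1}(s) = Z_k'(s) - \tfrac{1}{2}\omega(s)Z_k(s)$. Differentiating the induction hypothesis by the ordinary Leibniz rule yields
\begin{equation*}
Z_k'(s) = \sum_{j=0}^{k}\binom{k}{j}\bigl[f_{k-j}'(s)\zeta^{(j)}(s) + f_{k-j}(s)\zeta^{(j+1)}(s)\bigr],
\end{equation*}
and combining with $-\tfrac{1}{2}\omega(s)Z_k(s)$ groups the $\zeta^{(j)}$ terms with the factor $f_{k-j}'(s) - \tfrac{1}{2}\omega(s)f_{k-j}(s)$, which is $f_{k-j+1}(s)$ by the definition of the $f_\ell$'s. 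Thus
\begin{equation*}
Z_{k+1}(s) = \sum_{j=0}^{k}\binom{k}{j}f_{k+1-j}(s)\zeta^{(j)}(s) + \sum_{j=0}^{k}\binom{k}{j}f_{k-j}(s)\zeta^{(j+1)}(s).
\end{equation*}

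The remaining step is bookkeeping: shift the index $j \mapsto j-1$ in the second sum, separate off the boundary terms $j=0$ and $j=k+1$, and on the overlap $1\le j\le k$ apply Pascal's identity $\binom{k}{j}+\binom{k}{j-1}=\binom{k+1}{j}$ to collect $\binom{k+1}{j}f_{k+1-j}(s)\zeta^{(j)}(s)$. This produces exactly $\sum_{j=0}^{k+1}\binom{k+1}{j}f_{k+1-j}(s)\zeta^{(j)}(s)$, completing the induction.

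The only mild subtlety — and the main thing to check — is the boundary case $j=k$ in the rearrangement, which requires interpreting the recursion for $f_\ell$ at $\ell=0$ so that $f_1(s) = f_0'(s) - \tfrac{1}{2}\omega(s)f_0(s) = -\tfrac{1}{2}\omega(s)$. Conceptually, the whole argument is the Leibniz rule for the twisted operator $L := \frac{d}{ds} - \tfrac{1}{2}\omega(s)$, which satisfies $L(uv) = (Lu)v + u\,v'$; applied iteratively to $u=1$, $v=\zeta$ it yields $L^k(\zeta) = \sum_j \binom{k}{j}(L^{k-j}1)\zeta^{(j)}$, and by construction $L^{k-j}1 = f_{k-j}$ and $L^k\zeta = Z_k$.
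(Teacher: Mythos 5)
Your proof is correct and follows essentially the same route as the paper: induction on $k$, expanding $Z_{k+1}=Z_k'-\tfrac12\omega Z_k$, recognising $f_{k-j}'-\tfrac12\omega f_{k-j}=f_{k+1-j}$, shifting the index, and applying Pascal's identity. Your closing remark about the twisted operator $L=\frac{d}{ds}-\tfrac12\omega(s)$ (and the observation that the $f_\ell$ recursion must be read as starting at $\ell=0$, giving $f_1=-\tfrac12\omega$) is a nice conceptual repackaging of the same computation, not a different argument.
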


\begin{proof}
The case $k=0$ is clear. We assume that this is valid for $k$. By the definition,
\begin{align*}
Z_{k+1}(s)&=Z_k'(s)-\frac{1}{2}\omega(s)Z_k(s)\\
&=\sum_{j=0}^{k}\binom{k}{j}f_{k-j}'(s)\zeta^{(j)}(s)+\sum_{j=0}^{k}\binom{k}{j}f_{k-j}(s)\zeta^{(j+1)}(s) \\
&\quad-\frac{1}{2}\omega(s)\sum_{j=0}^{k}\binom{k}{j}f_{k-j}(s)\zeta^{(j)}(s)\\
&=\sum_{j=0}^{k}\binom{k}{j}f_{k+1-j}(s)\zeta^{(j)}(s)+\sum_{j=0}^{k}\binom{k}{j}f_{k-j}(s)\zeta^{(j+1)}(s)\\
&=\sum_{j=0}^{k+1}\left\{\binom{k}{j}+\binom{k}{j-1}\right\}f_{k+1-j}(s)\zeta^{(j)}(s)\\
&=\sum_{j=0}^{k+1}\binom{k+1}{j}f_{k-j}(s)\zeta^{(j)}(s).
\end{align*}
Here to obtain the last equality, we use the relation
\begin{equation*}
\binom{k}{j}+\binom{k}{j-1}=\binom{k+1}{j}.
\end{equation*}
\end{proof}

The function $f_k(s)$ can be expressed explicitly as follows.

\begin{prop}\label{exf_k}
For $k\geq 1$, we have
\begin{equation*}
f_k(s)=k!\sum_{a_1+2a_2+\dots+ka_k=k}\left(-\frac{1}{2} \right)^{a_1+\dots+a_k}\prod_{l=1}^{k}\frac{1}{a_{l}!}\left(\frac{\omega^{(l-1)}(s)}{l!} \right)^{a_l}.
\end{equation*}
\end{prop}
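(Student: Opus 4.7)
The plan is to recognise the sequence $\{f_k\}$ as the repeated logarithmic-derivative expansion of $\chi(s)^{-1/2}$, and then read off the closed form by Fa\`a di Bruno's formula. The key identity I would establish first is
\begin{equation*}
f_k(s)=\chi(s)^{1/2}\,\frac{d^k}{ds^k}\!\left(\chi(s)^{-1/2}\right),
\end{equation*}
which, once available, reduces the proposition to a standard application of Fa\`a di Bruno to the composition $e^{\phi(s)}$ with $\phi(s):=-\tfrac{1}{2}\log\chi(s)$.

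To prove the identity above, I would argue by induction on $k$. The base case is immediate because $f_0=1$. For the inductive step, use the elementary fact that $(\log\chi)'=\omega$, hence
\begin{equation*}
\bigl(\chi(s)^{\pm 1/2}\bigr)'=\pm\tfrac{1}{2}\omega(s)\,\chi(s)^{\pm 1/2}.
\end{equation*}
Writing $f_k(s)=\chi(s)^{1/2}g_k(s)$ with $g_k(s):=\bigl(\chi(s)^{-1/2}\bigr)^{(k)}$, the product rule and this formula give $f_k'(s)=\tfrac{1}{2}\omega(s)f_k(s)+\chi(s)^{1/2}g_{k+1}(s)$, so the defining recurrence $f_{k+1}=f_k'-\tfrac{1}{2}\omega f_k$ yields $f_{k+1}=\chi^{1/2}g_{k+1}$, closing the induction.

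Now set $\phi(s)=-\tfrac{1}{2}\log\chi(s)$, so that $\chi^{-1/2}=e^{\phi}$ and, for every $l\ge 1$,
\begin{equation*}
\phi^{(l)}(s)=-\tfrac{1}{2}\omega^{(l-1)}(s).
\end{equation*}
Fa\`a di Bruno's formula for the $k$-th derivative of a composition with $\exp$ states that
\begin{equation*}
\frac{d^k}{ds^k}e^{\phi(s)}=e^{\phi(s)}\sum_{a_1+2a_2+\cdots+ka_k=k}\frac{k!}{a_1!\cdots a_k!}\prod_{l=1}^{k}\!\left(\frac{\phi^{(l)}(s)}{l!}\right)^{\!a_l}.
\end{equation*}
Multiplying by $\chi(s)^{1/2}=e^{-\phi(s)}$ cancels the prefactor $e^{\phi(s)}$, and substituting $\phi^{(l)}=-\tfrac{1}{2}\omega^{(l-1)}$ pulls out a factor $(-\tfrac{1}{2})^{a_1+\cdots+a_k}$ from the product, giving exactly the formula asserted in the proposition.

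The main obstacle is not really computational; it is noticing the identity $f_k=\chi^{1/2}(\chi^{-1/2})^{(k)}$, after which everything follows from a routine invocation of Fa\`a di Bruno. An alternative, purely inductive proof directly from the recurrence is possible but combinatorially awkward, since one must verify that the coefficient of each monomial in the partition sum transforms correctly under $f_{k+1}=f_k'-\tfrac{1}{2}\omega f_k$; the approach above avoids that bookkeeping entirely by outsourcing it to the known Fa\`a di Bruno identity.
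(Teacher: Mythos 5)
Your proof is correct, but it takes a genuinely different route from the paper. The paper proves the formula by direct induction on $k$ from the recurrence $f_{k+1}=f_k'-\tfrac{1}{2}\omega f_k$, carrying out the combinatorial bookkeeping explicitly: it differentiates the partition sum and re-indexes ($b_m=a_m-1$, $b_{m+1}=a_{m+1}+1$ for the derivative term, $c_1=a_1+1$ for the $-\tfrac{1}{2}\omega f_k$ term) so that the contributions recombine into the sum over partitions of $k+1$ with the factor $(k+1)!$. You instead first establish, by an easy induction, the closed form $f_k=\chi^{1/2}\,(\chi^{-1/2})^{(k)}$ --- the $\zeta$-free analogue of Y\i ld\i r\i m's definition (\ref{Y's-def}) --- and then invoke Fa\`{a} di Bruno for $e^{\phi}$ with $\phi=-\tfrac{1}{2}\log\chi$, $\phi^{(l)}=-\tfrac{1}{2}\omega^{(l-1)}$; this turns into a proof exactly the heuristic the paper only mentions after the statement (``we can guess this expression by (\ref{Y's-def}) and Fa\`{a} di Bruno's formula''). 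Your route outsources the combinatorics to a known identity and, as a by-product, makes transparent both the equivalence of Matsuoka's and Y\i ld\i r\i m's definitions and Proposition \ref{exZ_k} (via Leibniz applied to $\chi^{-1/2}\zeta$), whereas the paper's argument is self-contained and purely elementary. The one point you pass over silently is that $\chi^{1/2}$ and $\log\chi$ require a branch choice: one should work on a simply connected domain avoiding the zeros and poles of $\chi$ (the non-positive even and positive odd integers), where any fixed branch validates the computation, and then note that the resulting identity extends to all $s$ by analytic continuation, since both sides are polynomials in $\omega,\omega',\dots,\omega^{(k-1)}$ and hence meromorphic on $\mathbb{C}$. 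The paper's inductive proof avoids this issue entirely, at the cost of the heavier index bookkeeping.
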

We can guess this expression by (\ref{Y's-def}) and Fa\`{a} di Bruno's formula (see \cite{J}, \cite{R}).

\begin{proof}
We will prove this statement by induction on $k$. When $k=1$, this formula is trivial.
We assume that this is true for $k=j$. By the definition, we have
\begin{align*}
f_{j+1}(s)
&=
f_j'(s)-\frac{1}{2}\omega(s)f_j(s) \\
&=
j!\sum_{a_1+2a_2+\dots+ja_j=j}\left(-\frac{1}{2} \right)^{a_1+\dots+a_j}\sum_{\substack{m=1 \\ a_m\geq 1}}^{j}\prod_{\substack{l=1 \\ l\neq m}}^{k}\frac{1}{a_{l}!}\left(\frac{\omega^{(l-1)}(s)}{l!} \right)^{a_l} \\
&\quad
\times \frac{1}{(a_m-1)!}\left(\frac{1}{m!} \right)^{a_m}\omega^{(m-1)}(s)^{a_m-1}\omega^{(m)}(s) \\
&\quad
-\frac{1}{2}\omega(s)j!\sum_{a_1+2a_2+\dots+ja_j=j}\left(-\frac{1}{2} \right)^{a_1+\dots+a_j}\prod_{l=1}^{j}\frac{1}{a_{l}!}\left(\frac{\omega^{(l-1)}(s)}{l!} \right)^{a_l}.
\end{align*}
On the first term, let $b_m=a_m-1, b_{m+1}=a_{m+1}+1$ and $b_l=a_l \ (l\neq m, m+1)$ for each $m \ (1\leq m\leq j-1)$. When $m=j$, we put $b_j=a_j-1$ and $b_{j+1}=1$. Then we can see that the first term is
\begin{align*}
&\quad
j!\sum_{b_1+2b_2+\dots+jb_j=j+1}\left(-\frac{1}{2} \right)^{b_1+\dots+b_j}\sum_{m=1}^{j-1}(m+1)b_{m+1}\prod_{l=1}^{j}\frac{1}{b_{l}!}\left(\frac{\omega^{(l-1)}(s)}{l!} \right)^{b_l} \\
&\quad
+(j+1)!\sum_{\substack{b_1+2b_2+\dots+(j+1)b_{j+1}=j+1 \\ b_{j+1}=1}}\left(-\frac{1}{2} \right)^{b_1+\dots+b_{j+1}}\prod_{l=1}^{j+1}\frac{1}{b_{l}!}\left(\frac{\omega^{(l-1)}(s)}{l!} \right)^{b_l} \\
&=
(j+1)!\sum_{b_1+2b_2+\dots+(j+1)b_{j+1}=j+1}\left(-\frac{1}{2} \right)^{b_1+\dots+b_{j+1}}\prod_{l=1}^{j+1}\frac{1}{b_{l}!}\left(\frac{\omega^{(l-1)}(s)}{l!} \right)^{b_l} \\
&\quad
-j!\sum_{b_1+2b_2+\dots+(j+1)b_{j+1}=j+1}\left(-\frac{1}{2} \right)^{b_1+\dots+b_{j+1}}b_1\prod_{l=1}^{j+1}\frac{1}{b_{l}!}\left(\frac{\omega^{(l-1)}(s)}{l!} \right)^{b_l}.
\end{align*}
As for the second term, when we put $c_1=a_1+1$ and $c_l=a_l \quad (2\leq l\leq j)$ then
\begin{align*}
j!\sum_{c_1+2c_2+\dots+jc_j=j+1}\left(-\frac{1}{2} \right)^{c_1+\dots+c_j}c_1\prod_{l=1}^{j}\frac{1}{c_{l}!}\left(\frac{\omega^{(l-1)}(s)}{l!} \right)^{c_l}.
\end{align*}
Therefore
\begin{align*}
&\quad
f_{j+1}(s) \\
&=
(j+1)!\sum_{a_1+2a_2+\dots+(j+1)a_{j+1}=j+1}\left(-\frac{1}{2} \right)^{a_1+\dots+a_{j+1}}\prod_{l=1}^{j+1}\frac{1}{a_{l}!}\left(\frac{\omega^{(l-1)}(s)}{l!} \right)^{a_l}.
\end{align*}
This completes the proof.
\end{proof}

\section{The poles of $f_k(s)$ and $Z_k(s)$}
We investigate the poles of $f_k(s)$ and $Z_k(s)$.

\begin{lemma}[Lemma 1 in \cite{M-T}]\label{pole-omega}
The poles of $\omega(s)$ are all simple, and located at positive odd integers with residue $-1$ and at non-positive even integers with residue $1$.
\end{lemma}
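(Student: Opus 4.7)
The plan is to decompose $\omega(s)=\log 2\pi-\frac{\Gamma'}{\Gamma}(s)+\frac{\pi}{2}\tan\frac{\pi s}{2}$ into its three summands, locate the poles and compute residues of each, and finally combine. The constant $\log 2\pi$ contributes nothing, so only the digamma term and the tangent term matter.

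First I would recall that $\frac{\Gamma'}{\Gamma}(s)$ is meromorphic on $\mathbb{C}$ with simple poles exactly at the non-positive integers $s=0,-1,-2,\ldots$, each with residue $-1$; this follows at once from $\Gamma(s)=\Gamma(s+1)/s$ applied iteratively (or from the standard partial-fraction expansion of $\psi$). Consequently $-\frac{\Gamma'}{\Gamma}(s)$ has simple poles at the non-positive integers with residue $+1$.

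Next I would treat $\frac{\pi}{2}\tan\frac{\pi s}{2}$, whose poles occur precisely where $\cos(\pi s/2)=0$, i.e.\ at the odd integers $s=2n+1$, $n\in\mathbb{Z}$. Writing $s=2n+1+\varepsilon$ and expanding, one has $\cos(\pi s/2)=-(-1)^n\sin(\pi\varepsilon/2)\sim -(-1)^n\pi\varepsilon/2$ and $\sin(\pi s/2)\to(-1)^n$, so
\begin{equation*}
\frac{\pi}{2}\tan\frac{\pi s}{2}=\frac{\pi}{2}\cdot\frac{\sin(\pi s/2)}{\cos(\pi s/2)}\sim -\frac{1}{\varepsilon}
\end{equation*}
near $s=2n+1$. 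Hence this term has simple poles at every odd integer, each with residue $-1$.

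Finally I would combine the two contributions case by case. At positive odd integers only the tangent is singular, giving residue $-1$; at non-positive even integers only the digamma is singular, giving residue $+1$; at positive even integers neither term has a pole; and at non-positive odd integers both contributions appear and exactly cancel ($+1$ from $-\Gamma'/\Gamma$ against $-1$ from the tangent), leaving $\omega$ regular there. This yields precisely the stated pole structure. I anticipate no real obstacle; the only point that requires care is the cancellation at non-positive odd integers, which is the cleanest way to confirm that the pole set is exactly as claimed rather than being larger.
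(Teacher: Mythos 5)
Your proof is correct, but it runs along a different line from the paper's. The paper exploits the other half of the definition, namely $\omega=\chi'/\chi$ with $\chi(s)=2^s\pi^{s-1}\sin(\pi s/2)\Gamma(1-s)$: the poles of a logarithmic derivative are exactly the zeros and poles of $\chi$, all simple here, with residue equal to the order ($+1$ at the zeros, which are the non-positive even integers, and $-1$ at the poles, which are the positive odd integers). That argument is two lines and never needs a residue computation, since the residues are forced to be the integer orders; on the other hand it requires reading off the zero/pole set of $\chi$ from the product, including the cancellation between the zeros of $\sin(\pi s/2)$ and the poles of $\Gamma(1-s)$ at positive even integers. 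You instead work termwise from the explicit formula $\omega(s)=\log 2\pi-\frac{\Gamma'}{\Gamma}(s)+\frac{\pi}{2}\tan\frac{\pi s}{2}$, using the standard pole structure of the digamma function and a direct local expansion of the tangent term; the price is that you must check the cancellation of the two simple poles at the negative odd integers (which you do, and which mirrors the $\sin$--$\Gamma$ cancellation in the paper's version), and the benefit is that everything is an elementary residue calculation with no reference to $\chi$ at all. Both arguments are complete and give the same conclusion; your cancellation check at the non-positive odd integers is exactly the point that needed care, and you handled it correctly.
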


\begin{proof}
Since $\chi(s)=2^s\pi^{s-1}\sin (\pi s/2)\Gamma(1-s)$, the zeros of $\chi(s)$ are non-positive even integers and the poles are positive odd integers. Therefore we obtain the lemma.
\end{proof}

\begin{lemma}[Lemma 4.2 in \cite{M}]\label{f_poles}
For $k\geq 0$, the function $f_k(s)$ has poles of order $k$ which are located only at $\dots, -4,-2,0,1,3,5, \dots$.
\end{lemma}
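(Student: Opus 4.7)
My plan is to proceed by induction on $k$, using the recursion $f_{k+1}(s) = f_k'(s) - \tfrac{1}{2}\omega(s) f_k(s)$ together with Lemma \ref{pole-omega}. The base case $k = 0$ is immediate since $f_0 \equiv 1$ is holomorphic. For the inductive step, let $S = \{\ldots,-4,-2,0,1,3,5,\ldots\}$ and suppose $f_k$ is holomorphic on $\mathbb{C}\setminus S$ with a pole of order exactly $k$ at every $s_0 \in S$. Since $\omega$ is also holomorphic off $S$, both $f_k'$ and $\omega f_k$ are holomorphic off $S$, and hence so is $f_{k+1}$. This handles the location claim; the remaining task is to show that the order of the pole at each $s_0\in S$ is exactly $k+1$, not smaller through cancellation.

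The heart of the argument is a direct computation of the leading Laurent coefficient. Let $c$ be the residue of $\omega$ at $s_0$; by Lemma \ref{pole-omega}, $c=1$ when $s_0$ is a non-positive even integer and $c=-1$ when $s_0$ is a positive odd integer. By the inductive hypothesis, write
\begin{equation*}
f_k(s) = \frac{A_k}{(s-s_0)^k} + O\bigl((s-s_0)^{-k+1}\bigr), \qquad A_k \neq 0.
\end{equation*}
Differentiation gives $f_k'(s) = -kA_k(s-s_0)^{-k-1} + O((s-s_0)^{-k})$, and multiplication by $\omega(s) = c(s-s_0)^{-1} + O(1)$ yields $\omega(s) f_k(s) = cA_k(s-s_0)^{-k-1} + O((s-s_0)^{-k})$. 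Substituting into the recursion, the coefficient of $(s-s_0)^{-k-1}$ in $f_{k+1}$ is
\begin{equation*}
A_{k+1} = -\!\left(k + \tfrac{c}{2}\right) A_k.
\end{equation*}

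The main (indeed only) concern is whether $A_{k+1}$ could vanish. Since $c\in\{\pm 1\}$ and $k\in\mathbb{Z}_{\geq 0}$, the quantity $k + c/2$ is a nonzero half-integer: it equals $\pm 1/2$ when $k=0$ and has absolute value at least $1/2$ for $k\geq 1$. Hence $A_{k+1}\neq 0$, closing the induction. (As a sanity check, $A_1 = -c/2$ reproduces $f_1 = -\tfrac{1}{2}\omega$.) An alternative derivation of the leading coefficient is available through the closed form in Proposition \ref{exf_k}, where the generating function of the leading Laurent coefficients turns out to be $(1+t)^{-c/2}$, but the short inductive calculation above is more economical.
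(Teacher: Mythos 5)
Your proof is correct and follows essentially the same route as the paper: induction on $k$ via the recursion, expanding $f_k$ in a Laurent series at each point of $S$ and checking that the leading coefficient of $f_{k+1}$, namely $-(k+\tfrac{c}{2})A_k$ with $c=\pm 1$ from Lemma \ref{pole-omega}, cannot vanish. Your write-up is slightly more explicit than the paper's (holomorphy off $S$, the exact value of the residue $c$, and the base case $k=0$), but the key computation is identical.
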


\begin{proof}
The case $k=1$ is the previous lemma.
We assume that the lemma is valid for $k\geq 1$.
Let $a$ be a pole of $f_k(s)$. Then by the Laurent expansion at centre $a$,
we have
\begin{equation*}
f_k(s)=\frac{c_k}{(s-a)^k}+\cdots,
\end{equation*}
where $c_k$ does not vanish.
By the definition and the previous lemma, we have
\begin{equation*}
f_{k+1}(s)=\frac{-kc_k\pm\frac{c_k}{2}}{(s-a)^{k+1}}+\cdots.
\end{equation*}
Since $-kc_k\pm c_k/2\neq 0$, the lemma is true for $k+1$. This completes the proof of the lemma.
\end{proof}

	This lemma and Proposition \ref{exZ_k} immediately lead to the following lemma.
\begin{lemma}[Lemma 4.4 in \cite{M}]\label{poles}
For $k\geq 0$, the function $Z_k(s)$ has poles of order $k$ located at $0,3,5,7, \cdots$,
that of order $k+1$ located at $1$, and those of order $k-1$ located at $-2,-4,-6, \cdots$.
\end{lemma}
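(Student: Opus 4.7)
My plan is to apply the expansion from Proposition~\ref{exZ_k}, namely $Z_k(s)=\sum_{j=0}^{k}\binom{k}{j}f_{k-j}(s)\zeta^{(j)}(s)$, and to read off the pole order at each candidate point using Lemma~\ref{f_poles} together with the known pole/zero structure of $\zeta$ (simple pole at $s=1$, simple zeros at $-2,-4,\dots$, nonvanishing elsewhere). The singularities of the summands all lie in $\{\dots,-4,-2,0,1,3,5,\dots\}$, so I split into three subcases.

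At $a\in\{0,3,5,7,\dots\}$, each $\zeta^{(j)}(a)$ is finite and $\zeta(a)\neq 0$, so only the summand $j=0$ attains the pole order $k$, with leading Laurent coefficient $c_{k}(a)\zeta(a)\neq 0$, while the remaining summands contribute order $\leq k-1$; no cancellation can occur. At $s=1$ every summand achieves pole order $(k-j)+(j+1)=k+1$, and at $s=-2m$ the simple zero of $\zeta$ reduces the $j=0$ summand to order $k-1$, which coincides with the order of the $j=1$ summand. In these two cases Proposition~\ref{exZ_k} only yields the pole-order upper bound, and additional input is needed to exclude cancellation.

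For the matching lower bound I would return to the defining recursion $Z_{k+1}=Z_k'-\tfrac12\omega Z_k$. By Lemma~\ref{pole-omega}, near each relevant point $a$ one has $\omega(s)\sim\rho/(s-a)$ with $\rho=-1$ at positive odd integers and $\rho=+1$ at non-positive even integers. If $Z_k(s)=A(s-a)^{\nu}+\cdots$ with $A\neq 0$, a one-step Laurent computation gives $Z_{k+1}(s)=A(\nu-\rho/2)(s-a)^{\nu-1}+\cdots$, and the prefactor $\nu-\rho/2$ is nonzero for every integer $\nu$. Iterating from $Z_0=\zeta$, whose valuations at $1$, at $\{0,3,5,\dots\}$, and at $\{-2,-4,\dots\}$ are $-1$, $0$, and $+1$ respectively, yields $\mathrm{val}_a(Z_k)=\mathrm{val}_a(Z_0)-k$, which matches the claimed orders $k+1$, $k$, and $k-1$. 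The main obstacle is precisely this non-cancellation check at $s=1$ and at the negative even integers; the universal factor $\nu-\rho/2$ supplied by the recursion is what resolves it.
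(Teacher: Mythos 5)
Your proof is correct, but it is more than what the paper writes: the paper disposes of this lemma in one sentence, saying that Lemma~\ref{f_poles} and Proposition~\ref{exZ_k} ``immediately'' give the result, i.e.\ it relies purely on the decomposition $Z_k=\sum_j\binom{k}{j}f_{k-j}\zeta^{(j)}$. You correctly observe that this route is only immediate at $0,3,5,\dots$ (where the $j=0$ term dominates strictly); at $s=1$ all $k+1$ summands have pole order exactly $k+1$, and at $s=-2m$ the $j=0$ and $j=1$ summands both have order $k-1$, so the decomposition alone gives an upper bound and a non-cancellation check is genuinely needed. Your fix -- iterating the recursion $Z_{k+1}=Z_k'-\tfrac12\omega Z_k$ near a simple pole of $\omega$ with residue $\rho=\pm1$, so that a leading term $A(s-a)^{\nu}$ becomes $A(\nu-\tfrac{\rho}{2})(s-a)^{\nu-1}$ with $\nu-\tfrac{\rho}{2}\neq0$ for integer $\nu$, hence $\mathrm{val}_a(Z_k)=\mathrm{val}_a(\zeta)-k$ -- is exactly the computation the paper itself uses to prove Lemma~\ref{f_poles} for $f_k$, and which it reuses later when computing the leading Laurent coefficients $a_{k,-k}=(-k+\tfrac12)a_{k-1,-k+1}$ at $s=0$ and $b_{k,-k}=-(k-\tfrac32)b_{k-1,-k+1}$ at $s=2n+1$; you have simply applied it directly to $Z_k$ instead of to $f_k$. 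The paper's version is shorter (and the decomposition is still the cleanest way to see that poles can only occur at the listed points), while your version is self-contained, closes the cancellation gap the paper leaves implicit, and as a by-product produces the very leading-coefficient recursions the paper needs again in the proofs of the constants $e^{A_k}$ and of Theorem~\ref{sp-value}.
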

We understand that ``poles of order $-1$'' means zeros of order $1$.

This lemma implies $\xi_k(s)$ is entire, because $\Gamma(s/2)$ and $\Gamma((1-s)/2)$ have poles at non-positive even integers and positive odd integers respectively.

\section{Some estimates on $Z_k(s)$}
In this section, we prove some estimates on $Z_k(s)$.
Proposition \ref{exZ_k} implies that we need the estimates of $f_k(s)$ and $\zeta^{(k)}(s)$ for any non-negative integer $k$.

First we consider $f_k(s)$.
Stirling's formula implies that
\begin{lemma}\label{estgam}
For $\sigma \geq \frac{1}{4}$, we have

\begin{equation*}
\Gamma(s)=\sqrt{2\pi}s^{s-\frac{1}{2}}e^{-s}(1+O(|s|^{-1})),
\end{equation*}

\begin{equation*}
\frac{\Gamma'}{\Gamma}(s)=\log s+O(|s|^{-1}) \label{degamma}
\end{equation*}
and
\begin{equation*}
\frac{d^k}{ds^k}\frac{\Gamma'}{\Gamma}(s)=O_k(|s|^{-k}) \quad (k\geq 1).
\end{equation*}
\end{lemma}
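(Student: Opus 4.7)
The plan is to deduce all three estimates from Stirling's asymptotic expansion
\[\log \Gamma(s) = \left(s-\tfrac{1}{2}\right)\log s - s + \tfrac{1}{2}\log(2\pi) + R(s),\]
together with the standard fact that on any right half-plane $\sigma\geq \delta>0$ the remainder and all its $s$-derivatives satisfy $R^{(j)}(s)=O_j(|s|^{-j-1})$, uniformly in $s$. This follows for example from Binet's integral form of $R(s)$ after bounding the $s$-derivatives of the integrand, or directly from the polygamma series for the derivatives of $\Gamma'/\Gamma$. Exponentiating the asymptotic yields the first estimate at once.

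For the second estimate, I differentiate once: since $\frac{d}{ds}\{(s-\tfrac12)\log s - s\} = \log s - 1/(2s)$ and $R'(s)=O(|s|^{-2})$, we obtain
\[\frac{\Gamma'}{\Gamma}(s) = \log s - \frac{1}{2s} + R'(s) = \log s + O(|s|^{-1}).\]

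For the third estimate, with $k\geq 1$ I differentiate the previous identity $k$ further times. The $k$-th derivative of $\log s$ equals $(-1)^{k-1}(k-1)!\,s^{-k}$, that of $-1/(2s)$ equals $(-1)^{k+1}k!/(2s^{k+1})=O(|s|^{-k-1})$, and $R^{(k+1)}(s)=O_k(|s|^{-k-2})$, so the sum is $(-1)^{k-1}(k-1)!\,s^{-k}+O_k(|s|^{-k-1})=O_k(|s|^{-k})$, as claimed.

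The only real obstacle is justifying the uniform bound $R^{(j)}(s)=O_j(|s|^{-j-1})$ on the strip $\sigma\geq 1/4$. Using Binet's representation $R(s)=2\int_0^\infty \arctan(t/s)(e^{2\pi t}-1)^{-1}\,dt$, this reduces to elementary estimates of the form $|s+it|\geq c(|s|+t)$ for $t\geq 0$ on this strip, which isolate a factor of $|s|^{-j-1}$ from the $t$-integral after differentiating the integrand $j$ times. Alternatively one may invoke the polygamma identity $\psi^{(k)}(s)=(-1)^{k+1}k!\sum_{n\geq 0}(s+n)^{-k-1}$ for $k\geq 1$ and estimate by splitting the sum at $n\approx|s|$.
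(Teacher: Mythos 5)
Your argument is correct and is essentially the same approach as the paper's: the paper offers no proof at all, simply asserting that Stirling's formula implies the lemma, and your proposal just supplies the standard details (exponentiating the expansion of $\log\Gamma(s)$ and differentiating, with the remainder or polygamma series controlled uniformly on $\sigma\geq\tfrac14$). Note only that your shorthand bound $|s+it|\geq c(|s|+t)$ fails when $\Im s<0$, so in the Binet route one should split the $t$-integral at $t\asymp|s|$ and use the exponential decay of $(e^{2\pi t}-1)^{-1}$ there, or simply use the polygamma-series alternative you already mention, where $|s+n|\geq c(|s|+n)$ does hold on this half-plane.
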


Define the set $\mathscr{D}$ by removing all small circles whose centres are odd positive integers and even non-positive integers with radii depending on $k$ from the complex plane.

\begin{lemma}[Lemma 3.1 in \cite{M}]\label{esttan}
In the region $\{s \in \mathscr{D}|\sigma>1/4 \}$, we have
\begin{equation}\label{tan}
\tan \frac{\pi s}{2}=\begin{cases}
           i+O(e^{-2t}) & \text{$(t\geq 0)$}, \\
           -i+O(e^{2t}) & \text{$(t\leq 0)$},
           \end{cases}
\end{equation}
and
\begin{equation}\label{hightan}
\frac{d^k}{ds^k}\tan \frac{\pi s}{2}=O_k(e^{-2|t|}) \quad(k\geq 1).
\end{equation}
\end{lemma}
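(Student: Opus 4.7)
The plan is to convert $\tan(\pi s/2)$ into a rational function of $e^{i\pi s}$, whose modulus $e^{-\pi t}$ already carries the required exponential decay. Starting from the elementary identity
\begin{equation*}
\tan\frac{\pi s}{2}=-i\cdot\frac{e^{i\pi s/2}-e^{-i\pi s/2}}{e^{i\pi s/2}+e^{-i\pi s/2}},
\end{equation*}
for $t\geq 0$ I would factor out the larger exponential $e^{-i\pi s/2}$ from both numerator and denominator to get
\begin{equation*}
\tan\frac{\pi s}{2}=i-\frac{2i\,e^{i\pi s}}{1+e^{i\pi s}},
\end{equation*}
and symmetrically, for $t\leq 0$, factoring out $e^{i\pi s/2}$ gives
\begin{equation*}
\tan\frac{\pi s}{2}=-i+\frac{2i\,e^{-i\pi s}}{1+e^{-i\pi s}}.
\end{equation*}
The zeros of $1+e^{\pm i\pi s}$ lie precisely at the odd integers, i.e.\ exactly at the centres of the discs removed to form $\mathscr{D}$ in the half-plane $\sigma>1/4$; moreover, as $t\to\pm\infty$ the corresponding $|e^{\pm i\pi s}|$ tends to $0$, forcing $|1+e^{\pm i\pi s}|\to 1$. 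Together these facts give a uniform positive lower bound for $|1+e^{\pm i\pi s}|$ on $\mathscr{D}\cap\{\sigma>1/4\}$. Since $|e^{i\pi s}|=e^{-\pi t}\leq e^{-2t}$ for $t\geq 0$ (using $\pi>2$), the first asserted estimate follows, and the case $t\leq 0$ is identical.

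For the higher derivatives the constant $\pm i$ dies under differentiation, so the task reduces to estimating the $k$-th derivative of $e^{\pm i\pi s}/(1+e^{\pm i\pi s})=1-1/(1+e^{\pm i\pi s})$. A short induction on $k$ shows this derivative has the shape $P_k(e^{\pm i\pi s})/(1+e^{\pm i\pi s})^{k+1}$, where $P_k$ is a polynomial every monomial of which has positive degree; hence $|P_k(e^{\pm i\pi s})|=O_k(|e^{\pm i\pi s}|)$ on the half of the strip where $|e^{\pm i\pi s}|\leq 1$. Combined with the same lower bound on the denominator, this yields $O_k(e^{-\pi|t|})\subseteq O_k(e^{-2|t|})$ and proves $(\ref{hightan})$.

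The only point at which one could slip is the uniform lower bound for $|1+e^{\pm i\pi s}|$, but this is exactly what $\mathscr{D}$ is engineered to supply: the zeros are simple and isolated, so outside the prescribed small discs the function is bounded away from $0$ on any bounded portion of the strip by continuity, while for $|t|$ large (on the relevant half) the trivial triangle-inequality bound $|1+e^{\pm i\pi s}|\geq 1-|e^{\pm i\pi s}|\geq 1/2$ takes over. This is the main obstacle, though a mild one.
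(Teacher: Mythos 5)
The paper itself does not reproduce a proof of this lemma --- it is quoted from Matsuoka's preprint \cite{M} --- so your argument can only be judged on its own terms, and on those terms it is correct and is the standard route: writing $\tan\frac{\pi s}{2}$ as a M\"obius-type expression in $e^{\pm i\pi s}$, whose modulus $e^{-\pi|t|}\le e^{-2|t|}$ supplies the decay, with the removed discs of $\mathscr{D}$ guaranteeing that the denominator $1+e^{\pm i\pi s}$ stays away from $0$; the induction giving $P_k(e^{\pm i\pi s})/(1+e^{\pm i\pi s})^{k+1}$ with $P_k$ having no constant term correctly handles \eqref{hightan}. One point needs a slightly more careful sentence than the one you wrote: the sub-band where the triangle inequality does not yet apply, namely $\{\sigma>1/4,\ |t|\le (\log 2)/\pi\}$ minus the discs, is \emph{unbounded} in the $\sigma$-direction, so ``bounded portion of the strip by continuity'' does not literally cover it. The fix is immediate: $e^{i\pi s}$ has period $2$ in $\sigma$, so the lower bound on a single fundamental domain (a compact set once the open discs are deleted) propagates to the whole band; alternatively, note that the zeros of $1+e^{\pm i\pi s}$ at the odd integers are simple, so $|1+e^{\pm i\pi s}|\gg r$ uniformly at distance $\ge r$ from them. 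With that one-line repair the proof is complete. As a side remark, \eqref{hightan} can also be deduced from \eqref{tan} without any induction by applying Cauchy's integral formula on circles of fixed radius contained in a copy of $\mathscr{D}$ with slightly smaller deleted discs, since the constants $\pm i$ are killed by differentiation; your explicit computation buys the same conclusion with entirely elementary means.
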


By this lemma and Lemma $\ref{estgam}$, we have
\begin{lemma}[Lemma 3.2 in \cite{M}]\label{estome}
For $s \in \mathscr{D}$, we have
\begin{equation}\label{omega}
\omega(s)=-\log |s|+O(1),
\end{equation}
and
\begin{equation}\label{highomega}
\omega^{(k)}(s)=O_k(1) \quad(k\geq 1).
\end{equation}
\end{lemma}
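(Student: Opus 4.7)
The plan is to write $\omega(s)$ as
\[
\omega(s) = \log 2\pi - \frac{\Gamma'}{\Gamma}(s) + \frac{\pi}{2}\tan\frac{\pi s}{2},
\]
and to estimate the three terms separately using Lemma \ref{estgam} and Lemma \ref{esttan}, first on the half-plane $\sigma \geq 1/4$ (which is where those lemmas apply) and then extending to $\sigma \leq 1/4$ by the symmetry $\omega(s)=\omega(1-s)$.

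For the first assertion, I would restrict to $s \in \mathscr{D}$ with $\sigma \geq 1/4$. By Lemma \ref{estgam}, $\Gamma'/\Gamma(s) = \log s + O(|s|^{-1})$; since $|\arg s| \le \pi/2$ in that half-plane, $\log s = \log|s| + O(1)$, hence $-\Gamma'/\Gamma(s) = -\log|s| + O(1)$. The constant term $\log 2\pi$ is absorbed into $O(1)$, and by Lemma \ref{esttan} $\tan(\pi s/2) = \pm i + O(e^{-2|t|}) = O(1)$. Adding these three contributions gives $\omega(s) = -\log|s| + O(1)$ on that half-plane. For $s \in \mathscr{D}$ with $\sigma < 1/4$, we apply this to $1-s$, which satisfies $\operatorname{Re}(1-s) > 3/4 \geq 1/4$; by $\omega(s) = \omega(1-s)$ we obtain $\omega(s) = -\log|1-s| + O(1)$, and the elementary estimate $\log|1-s| = \log|s| + O(1)$ (valid for $|s|$ bounded away from $0$ and $1$, which holds on $\mathscr{D}$) completes this case.

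For the second assertion, again I would first work in $\sigma \geq 1/4$. The constant term drops out, the derivative estimate $(\Gamma'/\Gamma)^{(k)}(s) = O_k(|s|^{-k}) = O_k(1)$ comes directly from Lemma \ref{estgam}, and the bound $(\tan(\pi s/2))^{(k)} = O_k(e^{-2|t|}) = O_k(1)$ from Lemma \ref{esttan}. For $\sigma < 1/4$, differentiating $\omega(s)=\omega(1-s)$ yields $\omega^{(k)}(s) = (-1)^k \omega^{(k)}(1-s)$, and the right-hand side is $O_k(1)$ by the case already proved.

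There is no real obstacle here: once one sets up the reduction to $\sigma \ge 1/4$ via the symmetry $\omega(s)=\omega(1-s)$, everything is a direct bookkeeping exercise combining Lemma \ref{estgam} and Lemma \ref{esttan}. The only small point requiring care is that in $\mathscr{D}$ the point $s$ stays a fixed positive distance from the poles of $\omega$, which prevents $|s|$ (and $|1-s|$) from being small and thus keeps $\log|s|$ (and $\log|1-s|$) genuinely $O(1)$ when $|s|$ is bounded.
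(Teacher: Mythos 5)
Your argument is correct and is essentially the proof the paper intends: the paper simply states that the lemma follows by combining Lemma \ref{estgam} and Lemma \ref{esttan}, and your reduction of the case $\sigma<1/4$ to the half-plane $\sigma\geq 1/4$ via the symmetry $\omega(s)=\omega(1-s)$ (together with $\log|1-s|=\log|s|+O(1)$ on $\mathscr{D}$) is exactly the bookkeeping being left to the reader. The only cosmetic point is that Lemma \ref{esttan} is stated for $\sigma>1/4$ rather than $\sigma\geq 1/4$, which is harmless since the tangent term is bounded away from its poles in $\mathscr{D}$.
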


Therefore we can see that $f_k(s)\ll_k (\log (|s|+2))^k$ for $s \in \mathscr{D}$ by Proposition \ref{exf_k}.

By Lemma \ref{f_poles},
\begin{equation*}
\left(\cos \frac{\pi s}{2}\right)^kf_k(s)
\end{equation*}
has no pole in the half plane $\Re s\geq \frac{1}{2}$.

From the above, for $\Re s\geq \frac{1}{2}$ there is a constant $C_1=C_1(k)$ such that
\begin{equation}\label{f-est}
\left(\cos \frac{\pi s}{2}\right)^kf_k(s)\ll_k e^{C_1|s|}.
\end{equation}

Next we consider $\zeta^{(k)}(s)$.
Let $k=0$.
It is known that
\begin{equation*}
\zeta(s)=\frac{1}{s-1}-s\int_{1}^{\infty}\frac{x^2-[x]-\frac{1}{2}}{x^{s+1}}dx+\frac{1}{2}.
\end{equation*}
Therefore we can see that when $s$ is not close to $1$,
\begin{equation*}
\zeta(s)\ll_{\eta} |s| \quad (\Re s >\eta)
\end{equation*}
for any $\eta>0$.
Therefore, by Cauchy's integral formula applied to a circle which has centre $s$ and radius $1/\log (|s|+2)$,

\begin{equation*}
\zeta^{(k)}(s)\ll_{\eta'} |s|\log^k (|s|+2) \quad (\Re s>\eta')
\end{equation*}
for $k\geq 1$ and any $\eta'=\eta'(k)>0$ unless $s$ is close to $1$.
Thus for all $\Re s>\eta'$, there is a constant $C_2=C_2(k)$ such that

\begin{equation}\label{zeta-est}
\left(\cos \frac{\pi s}{2} \right)^k(s-1)\zeta^{(k)}(s)\ll_k e^{C_2|s|}.
\end{equation}

\section{The factorization of $\xi_k(s)$}
By the estimates showed in the previous section, we can prove the following theorem.
\begin{theorem}\label{xi-facto}
For $k\geq 0$, there are constants $a_k$ and $B_k$ such that

\begin{equation}\label{factral-xi}
\xi_k(s)=e^{A_k+B_ks}\prod_{\rho_k}\left(1-\frac{s}{\rho_k} \right)e^{\frac{s}{\rho_k}}
\end{equation}

for all $s$. Here the product is extended over all zeros $\rho_k$ of $\xi_k(s)$.

\end{theorem}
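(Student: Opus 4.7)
The plan is to apply Hadamard's factorisation theorem, for which we must verify that $\xi_k(s)$ is an entire function of order at most $1$. Entirety has already been established at the end of Section~3. By the functional equation $(\ref{fcte-xi})$, it suffices to bound $|\xi_k(s)|$ on the half-plane $\Re s\ge 1/2$.

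A useful reformulation is obtained from Euler's reflection $\Gamma((1-s)/2)=\pi/(\cos(\pi s/2)\,\Gamma((1+s)/2))$. Substituting this into the definition of $\xi_k$ gives
\begin{equation*}
\xi_k(s)=\pi^{-s/2-k}\,s\cdot\frac{(s-1)(\cos(\pi s/2))^k Z_k(s)\,\Gamma((1+s)/2)^k}{\Gamma(s/2)^{k-1}}.
\end{equation*}
By Lemma~\ref{poles} and the fact that $\cos(\pi s/2)$ has simple zeros at the odd positive integers, the numerator factor $(s-1)(\cos(\pi s/2))^k Z_k(s)$ is holomorphic on $\Re s\ge 1/2$; the gamma quotient $\Gamma((1+s)/2)^k/\Gamma(s/2)^{k-1}$ is holomorphic there as well.

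To estimate the size of $(s-1)(\cos(\pi s/2))^k Z_k(s)$ I would substitute Proposition~\ref{exZ_k} and write
\begin{equation*}
(\cos(\pi s/2))^k(s-1)Z_k(s)=\sum_{j=0}^k\binom{k}{j}\bigl((\cos(\pi s/2))^{k-j}f_{k-j}(s)\bigr)\bigl((\cos(\pi s/2))^j(s-1)\zeta^{(j)}(s)\bigr).
\end{equation*}
The two bracketed factors are bounded by $e^{C|s|}$ on $\mathscr D\cap\{\Re s\ge 1/2\}$ via $(\ref{f-est})$ and $(\ref{zeta-est})$; since the left side is holomorphic on the whole half-plane, the small discs removed in defining $\mathscr D$ can be filled in by the maximum modulus principle, yielding a bound of shape $\exp(C|s|)$ throughout $\Re s\ge 1/2$. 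Stirling's formula (Lemma~\ref{estgam}) bounds $\Gamma((1+s)/2)^k$ and $1/\Gamma(s/2)^{k-1}$ each by $\exp(C|s|\log|s|)$ on this half-plane. Multiplying yields $|\xi_k(s)|\le \exp(C|s|\log|s|)$ for $|s|$ large, so $\xi_k$ has order at most $1$.

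Hadamard's factorisation theorem then delivers the theorem: because the order is at most $1$ we have $\sum|\rho_k|^{-2}<\infty$, so the canonical product $\prod_{\rho_k}(1-s/\rho_k)e^{s/\rho_k}$ converges, and the exponential prefactor is $e^{P(s)}$ with $\deg P\le 1$, which is the asserted form $e^{A_k+B_k s}$. The main obstacle in executing this plan is the bookkeeping in the reformulation above: one must check that the orders of the poles of $Z_k(s)$ recorded in Lemma~\ref{poles} are absorbed exactly by the zeros of $(\cos(\pi s/2))^k$ and of $s-1$, and that the gamma ratio truly contributes only $|s|\log|s|$ growth. Once this is verified, the order estimate and the appeal to Hadamard's theorem are routine.
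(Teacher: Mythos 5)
Your proposal is correct and follows essentially the same route as the paper: reduce to $\Re s\ge 1/2$ by the functional equation (\ref{fcte-xi}), use a reflection-type gamma identity to trade $\Gamma(\frac{1-s}{2})^{-k}$ for $(\cos\frac{\pi s}{2})^k$ times gamma factors with argument in the right half-plane, bound the result via Proposition \ref{exZ_k} together with (\ref{f-est}), (\ref{zeta-est}) and Stirling to get $\xi_k(s)\ll_k\exp(C|s|\log(|s|+2))$, and then invoke Hadamard's factorization. The only differences are cosmetic: the paper combines reflection with the duplication formula (yielding $\Gamma(s)/\Gamma(\frac{s}{2})^2$ in place of your $\Gamma(\frac{1+s}{2})^k/\Gamma(\frac{s}{2})^{k-1}$) and additionally notes that the order is exactly $1$, which is not needed for the stated product representation.
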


We note that it is possible that some of the $\rho_k$'s are real. Actually, Anderson (\cite{An}) proved the existence of real zeros for $k=1$. This fact is reproduced by Hall (\cite{H}).

\begin{proof}
It is sufficient to prove that the order of $\xi_k(s)$ is $1$, for then Hadamard's factorization theorem can be applied.

Let $\Re s \geq \frac{1}{2}$.
By the formulas
\begin{equation*}
\Gamma(s)\Gamma(1-s)=\frac{\pi}{\sin \pi s} \quad \text{and} \quad \Gamma(s)\Gamma \left(s+\frac{1}{2} \right)=\sqrt{\pi}2^{1-2s}\Gamma(2s),
\end{equation*}
we see that
\begin{equation*}
\Gamma \left(\frac{s}{2}\right)\Gamma \left(\frac{1-s}{2} \right)=2^{s-1}\sqrt{\pi}\frac{\Gamma(\frac{s}{2})^2}{\Gamma(s)\cos \frac{\pi s}{2}}.
\end{equation*}

Then, by the definition of $\xi_k(s)$ and Proposition \ref{exZ_k},
\begin{align*}
\xi_k(s)
&=
2^{-k(s-1)}\pi^{-\frac{s+k}{2}}s(s-1)\Gamma \left(\frac{s}{2} \right)\left(\frac{\Gamma(s)}{\Gamma \left(\frac{s}{2} \right)^2} \right)^k\left(\cos \frac{\pi s}{2}\right)^k \\
&\quad
\times\sum_{j=0}^{k}\binom{k}{j}f_{k-j}(s)\zeta^{(j)}(s).
\end{align*}

Lemma \ref{estgam} implies that 
\begin{equation}\label{xi_app}
\begin{split}
\xi_k(s)
&=
2^{-\frac{s+k}{2}+1}\pi^{-\frac{s-1}{2}-k}s^{\frac{s+k+1}{2}}e^{-\frac{s}{2}} \\
&\quad
\times\sum_{j=0}^{k}\binom{k}{j}\left(\cos \frac{\pi s}{2} \right)^kf_{k-j}(s)(s-1)\zeta^{(j)}(s)(1+O_k(|s|^{-1})).
\end{split}
\end{equation}
Finally, by the estimates in the previous section (\ref{zeta-est}) and (\ref{f-est}), we have
\begin{equation*}
\xi_k(s) \ll_k \exp(C|s|\log (|s|+2)),
\end{equation*}
where the constant $C$ depends only on $k$. The functional equation for $\xi_k(s)$ (\ref{fcte-xi}) implies that this estimate is valid for $\Re s\leq \frac{1}{2}$ too.
Thus the order of $\xi_k(s)$ is at most $1$.
By (\ref{xi_app}), we have $\log \xi_k(\sigma)\sim \frac{1}{2}\sigma\log \sigma$ for $\sigma\rightarrow \infty$.
This completes the proof.
\end{proof}

We can determine constants $e^{A_k}$ and $B_k$ in (\ref{factral-xi}).
\begin{theorem}
In the previous theorem, for $k\geq 0$ we have
\begin{equation*}
e^{A_k}=\xi_k(0)=\frac{(-1)^k(2k-1)!!}{(4\sqrt{\pi})^k}
\end{equation*}
and
\begin{equation*}
B_k=-\frac{2k(k-1)}{2k-1}\log 2-\frac{1}{2(2k-1)}\log 4\pi+\frac{\gamma}{2(2k-1)}-1,
\end{equation*}
where $\gamma$ is the Euler constant.
\end{theorem}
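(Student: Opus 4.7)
The plan is to read off both $e^{A_k}$ and $B_k$ from (\ref{factral-xi}) by evaluating at the origin. Setting $s=0$ in (\ref{factral-xi}) gives immediately $\xi_k(0)=e^{A_k}$; taking the logarithmic derivative and then setting $s=0$ gives $\xi_k'(0)/\xi_k(0)=B_k$, because the Weierstrass correction pairs $-1/\rho_k+1/\rho_k$ cancel at $s=0$. Both quantities will be computed from the definition of $\xi_k$ by using Proposition \ref{exZ_k} and the Laurent expansion at $s=0$ of $f_k(s)$, obtained from the defining recursion $f_{k+1}=f_k'-\tfrac12\omega f_k$.

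For $e^{A_k}$, let $c_k$ denote the leading Laurent coefficient of $f_k(s)$ at $s=0$. Since $\omega(s)$ has residue $+1$ there (Lemma \ref{pole-omega}), plugging $f_k(s)=c_k s^{-k}+\cdots$ into the recursion and matching the $s^{-k-1}$ term gives $c_{k+1}=-\tfrac{2k+1}{2}c_k$; starting from $c_0=1$ this yields $c_k=(-1)^k(2k-1)!!/2^k$. By Proposition \ref{exZ_k} only the $j=0$ term contributes to the leading pole of $Z_k$ at $0$, so $Z_k(s)\sim C_k/s^k$ with $C_k=c_k\zeta(0)=-c_k/2$. Combining this with $\Gamma(s/2)\sim 2/s$, $\Gamma((1-s)/2)\to\sqrt{\pi}$, and $s(s-1)\pi^{-s/2}\to -s$ near $s=0$ evaluates the limit to $\xi_k(0)=-C_k/(2^{k-1}\pi^{k/2})=(-1)^k(2k-1)!!/(4\sqrt{\pi})^k$.

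For $B_k$, differentiating the logarithm of the defining product gives
\begin{equation*}
\frac{\xi_k'}{\xi_k}(s)=-\frac{\log\pi}{2}+\frac{1}{s}+\frac{1}{s-1}+\frac{Z_k'}{Z_k}(s)-\frac{k-1}{2}\frac{\Gamma'}{\Gamma}\!\left(\frac{s}{2}\right)+\frac{k}{2}\frac{\Gamma'}{\Gamma}\!\left(\frac{1-s}{2}\right).
\end{equation*}
Near $s=0$ we have $Z_k'/Z_k=-k/s+D_k/C_k+O(s)$ and $(\Gamma'/\Gamma)(s/2)=-2/s-\gamma+O(s)$, so the three polar contributions $1/s-k/s+(k-1)/s$ cancel and the leftover constant is $D_k/C_k+(k-1)\gamma/2$. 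Here $D_k$ is the sub-leading Laurent coefficient of $Z_k$ at $0$, and Proposition \ref{exZ_k} gives $D_k=(c_k+kc_{k-1})\zeta'(0)-d_k/2$, where $d_k$ is the sub-leading Laurent coefficient of $f_k$. Matching one more order in the recursion yields $d_{k+1}=-\tfrac{2k-1}{2}d_k-\tfrac{\omega_0}{2}c_k$ with $\omega_0=\gamma+\log 2\pi$ (the constant term of $\omega$ at $0$), which solves in closed form to $d_k/c_k=k\omega_0/(2k-1)$. A short computation then gives $D_k/C_k=((k-1)\log 2\pi+k\gamma)/(2k-1)$, and combining with $(\Gamma'/\Gamma)(1/2)=-\gamma-2\log 2$, $1/(s-1)|_{s=0}=-1$, and $\zeta'(0)=-(\log 2\pi)/2$ produces the stated formula for $B_k$ after routine algebraic rearrangement.

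The main obstacle is pinning down both $c_k$ and $d_k$ in closed form; once the two recursions are solved, everything else is algebraic manipulation, and the specialisation to $k=0$ provides a sanity check against the classical values $\xi(0)$ and $B$ for the Riemann $\xi$-function.
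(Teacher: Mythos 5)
Your proposal is correct and follows essentially the same route as the paper: both read off $e^{A_k}=\xi_k(0)$ and $B_k=\xi_k'(0)/\xi_k(0)$ from the Hadamard product and then compute the two leading Laurent coefficients of the relevant function at $s=0$ by a recursion coming from the defining relation, using the residue $+1$ and constant term $\log 2\pi+\gamma$ of $\omega(s)$ at $0$. The only (minor) difference is that you run the recursion on the coefficients $c_k,d_k$ of $f_k$ and transfer to $Z_k$ via Proposition \ref{exZ_k} together with $\zeta(0),\zeta'(0)$, whereas the paper runs the analogous recursion directly on the Laurent coefficients $a_{k,-k},a_{k,-k+1}$ of $Z_k$; your closed forms $d_k/c_k=k(\log 2\pi+\gamma)/(2k-1)$ and $D_k/C_k=((k-1)\log 2\pi+k\gamma)/(2k-1)$ agree with the paper's $a_{k,-k+1}/a_{k,-k}$, and the final algebra reproduces the stated $B_k$.
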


Here, we note that
\begin{equation*}
(2k-1)!!=\prod_{l=1}^k(2l-1)=(2k-1)(2k-3)\cdots3\cdot2\cdot1,
\end{equation*}
and we define $(-1)!!=1$ and $(-3)!!=-1$.

\begin{proof}
The logarithmic derivative leads to
\begin{equation*}
\frac{\xi_k'}{\xi_k}(0)=B_k,
\end{equation*}
by the previous theorem. Thus, by the definition of $\xi_k(s)$,
\begin{equation*}
B_k=
-\frac{1}{2}\log \pi-1+\frac{k}{2}\frac{\Gamma'}{\Gamma}\left(\frac{1}{2} \right)+\lim_{s\rightarrow 0}\left(\frac{1}{s}-\frac{k-1}{2}\frac{\Gamma'}{\Gamma}\left(\frac{s}{2} \right)+\frac{Z_k'}{Z_k}(s)\right).
\end{equation*}

Since
\begin{equation*}
\frac{1}{s}+\frac{\Gamma'}{\Gamma}(s)=\frac{\Gamma'}{\Gamma}(s+1),
\end{equation*}
we have
\begin{align*}
\frac{1}{s}-\frac{k-1}{2}\frac{\Gamma'}{\Gamma}\left(\frac{s}{2} \right)+\frac{Z_k'}{Z_k}(s)
&=
\frac{k}{s}+\frac{Z_k'}{Z_k}(s)-\frac{k-1}{2}\left(\frac{2}{s}+\frac{\Gamma'}{\Gamma}\left(\frac{s}{2} \right)\right) \\
&=
\frac{k}{s}+\frac{Z_k'}{Z_k}(s)-\frac{k-1}{2}\frac{\Gamma'}{\Gamma}\left(\frac{s}{2}+1 \right).
\end{align*}

We consider the logarithmic derivative of $Z_k(s)$.
We recall $Z_k(s)$ has a pole with order $k$ at $s=0$.
Then we can express
\begin{equation*}
Z_k(s)=\sum_{l=-k}^{\infty}a_{k,l}s^l.
\end{equation*}

Then we obtain
\begin{align*}
\frac{k}{s}+\frac{Z_k'}{Z_k}(s)
&=
\frac{k}{s}+\frac{-ka_{k,-k}s^{-k-1}-(k-1)a_{k,-k+1}s^{-k}+O(|s|^{-k+1})}{a_{k,-k}s^{-k}+a_{k,-k+1}s^{-k+1}+O(|s|^{-k+2})} \\
&=
\frac{ka_{k,-k}s^{-k-1}+ka_{k,-k+1}s^{-k}+O(|s|^{-k+1})}{a_{k,-k}s^{-k}+a_{k,-k+1}s^{-k+1}+O(|s|^{-k+2})} \\
&\quad
+\frac{-ka_{k,-k}s^{-k-1}-(k-1)a_{k,-k+1}s^{-k}+O(|s|^{-k+1})}{a_{k,-k}s^{-k}+a_{k,-k+1}s^{-k+1}+O(|s|^{-k+2})} \\
&=
\frac{a_{k,-k+1}+O(|s|)}{a_{k,-k}+a_{k,-k+1}s+O(|s|^{2})}.
\end{align*}

Therefore
\begin{equation*}
\lim_{s\rightarrow 0}\left(\frac{k}{s}+\frac{Z_k'}{Z_k}(s)\right)=\frac{a_{k,-k+1}}{a_{k,-k}}.
\end{equation*}
By the definition (\ref{def}) and Lemma \ref{pole-omega}, we can see that for $k\geq 1$
\begin{equation*}
a_{k,-k}=-(k-1)a_{k-1,-k+1}-\frac{1}{2}a_{k-1,-k+1}=\left(-k+\frac{1}{2} \right)a_{k-1,-k+1},
\end{equation*}
(see the argument in the proof of Lemma \ref{f_poles}) and
\begin{align*}
\frac{a_{k,-k}}{a_{0,0}}
&=
\frac{a_{k,-k}}{a_{k-1,-k+1}}\cdot\frac{a_{k-1,-k+1}}{a_{k-2,-k+2}}\cdots\frac{a_{1,-1}}{a_{0,0}} \\
&=
\left(-k+\frac{1}{2} \right)\left(-k+\frac{3}{2} \right)\cdots \left(-\frac{1}{2}\right)=\left(-\frac{1}{2}\right)^{k}(2k-1)!!.
\end{align*}
Since $a_{0,0}=\zeta(0)=-1/2$, we have
\begin{equation*}
a_{k,-k}=\left(-\frac{1}{2}\right)^{k+1}(2k-1)!!.
\end{equation*}

On the other hand, by the definition (\ref{def}) and Lemma \ref{pole-omega}, we can obtain
\begin{equation*}
a_{k,-k+1}=-\frac{1}{2}(2k-3)a_{k-1,-k+2}-\frac{C_0}{2}a_{k-1,-k+1},
\end{equation*}
where $C_0$ is the constant term of the Laurent expansion of $\omega(s)$ around $0$ and it is $\log 2\pi+\gamma$.
When we put
\begin{equation*}
a'_k=\frac{a_{k,-k+1}}{-\frac{C_0}{2}a_{k-1,-k+1}}
\end{equation*}
we have
\begin{equation*}
a'_k=a'_{k-1}+1=a'_1+k-1.
\end{equation*}
Thus we can see that
\begin{equation*}
a_{k,-k+1}=\left(\frac{\gamma}{\log 2\pi+\gamma}+k-1 \right)(\log 2\pi+\gamma)\left(-\frac{1}{2}\right)^{k+1}\frac{(2k-1)!!}{2k-1}
\end{equation*}
by the fact that $a_{1,0}=(\zeta'(0)- \zeta(0)\log 2\pi- \zeta(0)\gamma)/2=\gamma/4$.

Hence we have
\begin{align*}
B_k
&=
-\frac{1}{2}\log \pi-1+\frac{k}{2}\frac{\Gamma'}{\Gamma}\left(\frac{1}{2} \right)-\frac{k-1}{2}\frac{\Gamma'}{\Gamma}(1) \\
&\quad
+\left(\frac{\gamma}{\log 2\pi+\gamma}+k-1 \right)\frac{\log 2\pi+\gamma}{2k-1} \\
&=
-\frac{2k(k-1)}{2k-1}\log 2-\frac{1}{2(2k-1)}\log 4\pi+\frac{\gamma}{2(2k-1)}-1.
\end{align*}
Here, to show the last equality, we use
\begin{equation*}
\frac{\Gamma'}{\Gamma}(1)=-\gamma \quad \text{and} \quad \frac{\Gamma'}{\Gamma}\left(\frac{1}{2} \right)=-\gamma-2\log 2.
\end{equation*}

Lastly, we calculate $\xi_k(0)=e^{A_k}$.
It is known that
\begin{equation*}
\Gamma(s+1)=s\Gamma(s).
\end{equation*}
Hence we obtain
\begin{align*}
\xi_k(0)
&=
-\Gamma \left(\frac{1}{2} \right)^{-k}\lim_{s\rightarrow 0}\frac{sZ_k(s)}{\Gamma \left(\frac{s}{2} \right)^{k-1}} \\
&=
-\Gamma \left(\frac{1}{2} \right)^{-k}\lim_{s\rightarrow 0}\left(\frac{s}{2}\right)^{k-1}\frac{sZ_k(s)}{\Gamma \left(\frac{s}{2}+1 \right)^{k-1}} \\
&=
-\frac{2^{-k+1}}{\Gamma(\frac{1}{2})^k}\lim_{s\rightarrow 0}s^kZ_k(s)=-2^{-k+1}\pi^{-\frac{k}{2}}a_{k,-k}=\frac{(-1)^k(2k-1)!!}{(4\sqrt{\pi})^k}.
\end{align*}

\end{proof}

\section{The special values of $\xi_k(s)$ at integer points}
In this section, we give an explicit expression of special values of $\xi_k(s)$ at positive integer points.
\begin{theorem}\label{sp-value}
Let $k\geq 0$. We have
\begin{equation*}
\xi_k(1)=\frac{(2k-1)!!}{(4\sqrt{\pi})^k},
\end{equation*}
and for $n\geq 1$,
\begin{align*}
&\quad
\xi_k(2n+1) \\
&=
(-1)^{kn+1}\pi^{-\frac{2n+k}{2}}\frac{(2n+1)!2n}{4^n\cdot n!}\left(\frac{4^{n-1}(n!)^2}{(2n)!}\right)^k(2k-3)!!\zeta(2n+1)
\end{align*}
and
\begin{align*}
\xi_k(2n)
&=
(-1)^{kn}\pi^{-\frac{2n+k}{2}}2n(2n-1)(n-1)!\left(\frac{(2n)!}{4^nn!(n-1)!}\right)^kZ_k(2n).
\end{align*}
\end{theorem}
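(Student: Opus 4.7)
The plan is to use the functional equation $\xi_k(s)=(-1)^k\xi_k(1-s)$ in combination with the pole structure supplied by Lemma~\ref{poles}, treating the three assertions as separate cases.

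The identity for $\xi_k(1)$ is immediate: the functional equation gives $\xi_k(1)=(-1)^k\xi_k(0)$, and the previous theorem already computes $\xi_k(0)=(-1)^k(2k-1)!!/(4\sqrt{\pi})^k$, so the two sign factors cancel. For the even case $\xi_k(2n)$ with $n\geq 1$, my plan is to plug directly into the definition. By Lemma~\ref{poles}, $Z_k(s)$ is regular at positive even integers, and every gamma factor appearing in $\xi_k(2n)$ is finite: $\Gamma(s/2)=\Gamma(n)=(n-1)!$, while $\Gamma((1-s)/2)=\Gamma(1/2-n)$ is computed from the reflection formula $\Gamma(1/2-n)\Gamma(1/2+n)=(-1)^n\pi$ together with the standard value $\Gamma(n+1/2)=(2n)!\sqrt{\pi}/(4^n n!)$. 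Substituting and regrouping powers of $4$, $\pi$, $n!$ and $(2n)!$ yields the stated expression.

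The odd case $\xi_k(2n+1)$ for $n\geq 1$ is where the real work lies. My approach is to invoke the functional equation once more, writing $\xi_k(2n+1)=(-1)^k\xi_k(-2n)$. At $s=-2n$, both $Z_k(s)$ (Lemma~\ref{poles}) and $\Gamma(s/2)^{k-1}$ carry a pole of order $k-1$, so the quotient in the definition of $\xi_k$ is finite, and the problem reduces to extracting the leading Laurent coefficient $b_{k,n}$ in $Z_k(s)=b_{k,n}(s+2n)^{-(k-1)}+\cdots$. I would compute $b_{k,n}$ inductively from the defining recursion $Z_{k+1}=Z_k'-\tfrac{1}{2}\omega Z_k$ together with Lemma~\ref{pole-omega} (which records $\mathrm{Res}_{s=-2n}\omega(s)=1$), paralleling the proof of Lemma~\ref{f_poles}. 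This yields $b_{k+1,n}=-\tfrac{2k-1}{2}b_{k,n}$ for $k\geq 1$, with base case $b_{1,n}=\tfrac{1}{2}\zeta'(-2n)$ obtained from $Z_1=\zeta'-\tfrac{1}{2}\omega\zeta$ and the trivial zero $\zeta(-2n)=0$, which removes the apparent pole of $\omega\zeta$. Iterating gives a closed form $b_{k,n}=(-1/2)^{k-1}(2k-3)!!\cdot\tfrac{1}{2}\zeta'(-2n)$. The final step is to trade $\zeta'(-2n)$ for $\zeta(2n+1)$ by differentiating $\zeta(s)=\chi(s)\zeta(1-s)$ at $s=-2n$: the term $\chi(-2n)\zeta'(2n+1)$ drops because $\chi$ vanishes there, and $\chi'(-2n)$ is evaluated directly from $\chi(s)=2^s\pi^{s-1}\sin(\pi s/2)\Gamma(1-s)$ to give $\chi'(-2n)=(-1)^n(2n)!/(2^{2n+1}\pi^{2n})$.

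I expect the main obstacle to be clerical rather than conceptual: tracking signs and nested powers of $4$, $\pi$, $n!$, $(2n)!$, and $(2k-3)!!$ through three successive substitutions (the $\xi_k$ functional equation, the Laurent-coefficient recursion for $Z_k$, and the $\zeta$ functional equation) demands careful bookkeeping, but no new idea is needed beyond the two inputs isolated above --- the leading coefficient of $Z_k$ at $-2n$ from the recursion, and the trade $\zeta'(-2n)\leftrightarrow\zeta(2n+1)$ from the functional equation of $\zeta$.
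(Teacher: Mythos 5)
Your treatment of $\xi_k(1)$ (functional equation plus the value $\xi_k(0)$ from the previous theorem) and of the even case (direct substitution with $\Gamma(n)=(n-1)!$ and $\Gamma(\tfrac12-n)$ from the reflection formula) coincides with the paper's proof. For the odd case you take a genuinely different route: the paper stays at $s=2n+1$, where $Z_k$ has a pole of order $k$ cancelled by $\Gamma(\frac{1-s}{2})^k$, and runs the Laurent-coefficient recursion $b_{k,-k}=-(k-\tfrac32)\,b_{k-1,-k+1}$ (residue $-1$ of $\omega$ at positive odd integers) down to $b_{0,0}=\zeta(2n+1)$, so $(2k-3)!!$ and $\zeta(2n+1)$ appear at once and no functional equation is needed. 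You instead reflect via $\xi_k(2n+1)=(-1)^k\xi_k(-2n)$ and work at $s=-2n$, where the pole of $Z_k$ has order $k-1$ matching $\Gamma(s/2)^{k-1}$; your ingredients are all correct — the recursion factor $-\tfrac{2k-1}{2}$ from the residue $+1$ of $\omega$ at negative even integers, the base value $Z_1(-2n)=\tfrac12\zeta'(-2n)$ coming from the trivial zero of $\zeta$, and the conversion $\zeta'(-2n)=\chi'(-2n)\zeta(2n+1)$ with $\chi'(-2n)=(-1)^n(2n)!/(2^{2n+1}\pi^{2n})$ — and I checked that these constants recombine exactly to the stated formula, sign $(-1)^{kn+1}$ and the factor $\bigl(4^{n-1}(n!)^2/(2n)!\bigr)^k$ included. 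The trade-off: your version needs two extra inputs (the trivial zero and the functional equation of $\zeta$) and heavier bookkeeping of powers of $2$ and $\pi$, but works one order lower in the Laurent expansion; the paper's argument is shorter and covers all $k\geq 0$ uniformly, whereas your induction as written starts at $k=1$, so the easy case $k=0$ of the odd formula (where $(2k-3)!!=(-3)!!=-1$) needs a one-line separate check, or the remark that your recursion already holds from $k=0$ because $\zeta$ has a simple zero at $-2n$.
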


Before starting the proof, we note that Proposition \ref{exZ_k} and \ref{exf_k} imply
\begin{align*}
Z_k(2n)
&=
\sum_{j=0}^{k}\frac{k!}{j!}\zeta^{(j)}(2n)\sum_{a_1+2a_2+\dots+(k-j)a_{k-j}=k-j}\left(-\frac{1}{2} \right)^{a_1+\dots+a_{k-j}} \\
&\quad
\times \prod_{l=1}^{k-j}\frac{1}{a_{l}!}\left(\frac{\omega^{(l-1)}(2n)}{l!} \right)^{a_l}.
\end{align*}
On the $\omega^{(l-1)}(2n)$, since
\begin{equation*}
\left.\frac{d^{l-1}}{ds^{l-1}}\tan \frac{\pi s}{2}\right|_{s=2n}
=
\begin{cases}
\displaystyle
\frac{(-1)^{m-1}B_{2m}(4^m-1)\pi^{2m-1}}{2m} & l=2m, \\
0 & l=2m+1,
\end{cases}
\end{equation*}
and for $l\geq 2$
\begin{equation*}
\left.\frac{d^{l-1}}{ds^{l-1}}\frac{\Gamma'}{\Gamma}(s)\right|_{s=2n}=(-1)^l(l-1)!\left(\zeta(l)-\sum_{j=1}^{2n-1}\frac{1}{j^l} \right),
\end{equation*}
we have
\begin{equation*}
\omega^{(l-1)}(2n)
=
\begin{cases}
\displaystyle
(-1)^m\frac{B_{2m}(2-4^m)\pi^{2m}}{4m}+(2m-1)!\sum_{j=1}^{2n-1}\frac{1}{j^{2m}} & l=2m, \\
\displaystyle
-(2m)!\left(\sum_{j=1}^{2n-1}\frac{1}{j^{2m+1}}-\zeta(2m+1) \right) & l=2m+1,
\end{cases}
\end{equation*}
where $B_{2m}$ is the $2m$-th Bernoulli number and therefore $\omega^{(l-1)}(2n)>0$.
When $l=1$, we can obtain
\begin{equation*}
\omega(2n)=\log 2\pi+\gamma-\sum_{j=1}^{2n-1}\frac{1}{j}
\end{equation*}
because
\begin{equation*}
\frac{\Gamma'}{\Gamma}(n)=-\gamma+\sum_{j=1}^{n-1}\frac{1}{j}.
\end{equation*}

\begin{proof}[Proof of Theorem \ref{sp-value}]
The case of $s=1$ is trivial.

Let $n\geq 1$. First we consider the case $s=2n$. When $s=2n$ we have
\begin{align*}
\xi_k(2n)
&=
\pi^{-n}2n(2n-1)\frac{Z_k(2n)}{\Gamma(n)^{k-1}\Gamma(\frac{1}{2}-n)^k} \\
&=
(-1)^{kn}\pi^{-\frac{2n+k}{2}}2n(2n-1)(n-1)!\left(\frac{(2n)!}{4^nn!(n-1)!}\right)^kZ_k(2n).
\end{align*}

Next we treat the case $s=2n+1$. In this case, we see that
\begin{align*}
\xi_k(2n+1)
&=
\pi^{-\frac{2n+1}{2}}(2n+1)2n\frac{1}{\Gamma(\frac{2n+1}{2})^{k-1}}\lim_{s\rightarrow 2n+1}\frac{Z_k(s)}{\Gamma(\frac{1-s}{2})^k} \\
&=
\pi^{-\frac{2n+1}{2}}(2n+1)2n\frac{(4^nn!)^{k-1}}{((2n)!)^{k-1}\pi^{\frac{k-1}{2}}} \\
&\quad
\times \lim_{s\rightarrow 2n+1}\frac{\prod_{m=0}^{n-1}(2m+1-s)^k}{2^{kn+k}\Gamma(\frac{2n+3-s}{2})^k}(2n+1-s)^kZ_k(s) \\
&=
(-1)^{kn+k}\pi^{-\frac{2n+k}{2}}(2n+1)2n\frac{(n!)^k(4^nn!)^{k-1}}{2^k((2n)!)^{k-1}} \\
&\quad
\times \lim_{s\rightarrow 2n+1}(s-2n-1)^kZ_k(s).
\end{align*}

We consider the Laurent expansion at $s=2n+1$ i.e.
\begin{equation*}
Z_k(s)=\sum_{l=-k}^{\infty}b_{k,l}(s-2n-1)^l.
\end{equation*}
By the definition (\ref{def}) and Lemma \ref{pole-omega}, we can see that
\begin{equation*}
b_{k,-k}=-(k-1)b_{k-1,-k+1}+\frac{1}{2}b_{k-1,-k+1}=-\left(k-\frac{3}{2} \right)b_{k-1,-k+1}.
\end{equation*}
Thus we have
\begin{align*}
\frac{b_{k,-k}}{b_{0,0}}
&=
\frac{b_{k,-k}}{b_{k-1,-k+1}}\cdot \cdots \cdot \frac{b_{1,-1}}{b_{0,0}} \\
&=
\left(-k+\frac{3}{2} \right)\cdots \left(-\frac{1}{2} \right)\frac{1}{2}=-\left(-\frac{1}{2} \right)^{k}(2k-3)!!.
\end{align*}
Since $b_{0,0}=\zeta(2n+1)$, we obtain
\begin{equation*}
b_{k,-k}=-\left(-\frac{1}{2}\right)^k(2k-3)!!\zeta(2n+1).
\end{equation*}
This leads to
\begin{align*}
&\quad
\xi_k(2n+1) \\
&=
(-1)^{kn+1}\pi^{-\frac{2n+k}{2}}\frac{(2n+1)!2n}{4^n\cdot n!}\left(\frac{4^{n-1}(n!)^2}{(2n)!}\right)^k(2k-3)!!\zeta(2n+1).
\end{align*}

\end{proof}

\section{Necessary Additional statements}
To show Matsuoka's theorem (Theorem \ref{M-th}), we need some additional statements.

By Proposition \ref{exf_k} and Lemma \ref{estome},
\begin{equation}\label{f_kest}
f_k(s)=\left(-\frac{\omega(s)}{2}\right)^k(1+O_k((\log |s|)^{-2}))
\end{equation}
for $s\in \mathscr{D}(2m)=\{ s\in \mathscr{D} \mid \sigma \geq 2m \}$ with sufficiently large integer $m=m(k)$.
This implies $\Re f_k(s)>0$ in this region.
Thus, by the argument principle, we have

\begin{lemma}[Lemma 4.3 in \cite{M}]\label{f_zeros-outside}
For $s\in \{ s \mid \sigma<1-2m \} \cup \{ s \mid \sigma>2m \}$, the zeros of $f_k(s)$ are in small circles centred negative even integers and positive odd integers,
and the number of those is $k$ in each circles.
\end{lemma}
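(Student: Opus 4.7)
The plan is to apply the argument principle on the boundary of each small circle, exploiting the fact that $\Re f_k > 0$ there (so the winding number of $f_k$ around $0$ is zero) together with the exact pole count from Lemma \ref{f_poles}. Once the winding is shown to vanish, the argument principle forces the number of zeros inside each small circle to match the order of the pole at the centre, which is $k$.

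As a preliminary step I would extend the asymptotic (\ref{f_kest}), and hence the positivity $\Re f_k(s) > 0$, from $\mathscr{D}(2m)$ to the mirror region $\mathscr{D} \cap \{\sigma \leq 1 - 2m\}$. For such $s$ the point $1-s$ lies in $\mathscr{D}(2m)$, so by Lemma \ref{estome} and $\omega(s) = \omega(1-s)$ one has $\omega(s) = -\log|1-s| + O(1) = -\log|s| + O(1)$, and similarly $\omega^{(l)}(s) = (-1)^l \omega^{(l)}(1-s) = O_l(1)$ for $l \geq 1$. Substituting these estimates into Proposition \ref{exf_k} reproduces (\ref{f_kest}) in the left region, and the same reasoning that yields $\Re(-\omega/2)^k > 0$ for $|s|$ large applies verbatim. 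Consequently every zero of $f_k$ in $\{\sigma > 2m\} \cup \{\sigma < 1 - 2m\}$ must lie inside one of the small excluded circles.

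To count the zeros in each such circle, fix a pole $a$ of $f_k$ lying in the region and let $C$ be the corresponding small circle, with $\partial C \subset \mathscr{D}$. By the previous paragraph $\Re f_k(s) > 0$ on $\partial C$, so $f_k(\partial C)$ lies in the right half-plane, a simply connected domain not containing $0$; hence the winding number of $f_k$ around $0$ along $\partial C$ is zero. The argument principle gives $Z_C - P_C = 0$, where $Z_C$ and $P_C$ count zeros and poles of $f_k$ in $C$ with multiplicity, and Lemma \ref{f_poles} identifies the unique pole in $C$ as the one at $a$, of order exactly $k$, so $Z_C = P_C = k$. The main obstacle is uniformity: the radii of the small circles depend on $k$ but not on the pole $a$, so $m$ must be fixed large enough in terms of $k$ that the error $O_k((\log |s|)^{-2})$ in (\ref{f_kest}) is uniformly small on every $\partial C$ with $|a| > 2m$, thereby securing $\Re f_k > 0$ on all these boundaries simultaneously.
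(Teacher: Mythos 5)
Your proposal is correct and follows essentially the same route as the paper: the paper also deduces $\Re f_k(s)>0$ from the asymptotic (\ref{f_kest}) and then invokes the argument principle together with the pole count of Lemma \ref{f_poles} to get exactly $k$ zeros in each small circle. Your extra steps (extending the estimate to $\sigma<1-2m$ via $\omega(s)=\omega(1-s)$ and spelling out the zero-winding and uniformity details) merely make explicit what the paper leaves implicit.
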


By Proposition \ref{exZ_k}, (\ref{f_kest}) and the facts that
\begin{equation*}
\zeta(s)=1+O(2^{-\sigma}), \quad \zeta^{(k)}=O_k(2^{-\sigma}),
\end{equation*}
we see that
\begin{align*}
Z_k(s)
&=
f_k(s)\left \{\zeta(s)+\sum_{j=1}^{k}\binom{k}{j}\frac{f_{k-j}(s)}{f_k(s)}\zeta^{(j)}(s) \right \} \\
&=
f_k(s)\{1+O(2^{-\sigma})\}
\end{align*}
for $s\in \mathscr{D}(2m)=\{ s\in \mathscr{D} \mid \sigma \geq 2m \}$ and $k\geq 1$.

Hence, by (\ref{f_kest})
\begin{equation}\label{Zkest}
Z_k(s)=\left(-\frac{\omega(s)}{2}\right)^k\{1+O_k((\log |s|)^{-2}) \} \quad(k\geq 1)
\end{equation}
for $s\in \mathscr{D}(2m)$. This leads to $\Re Z_k(s)>0$ in this region.
Hence, by the same argument as in Lemma \ref{f_zeros-outside}, we obtain

\begin{lemma}[Lemma 4.5 in \cite{M}]\label{zeros-outside}
For $s\in \{ s \mid \sigma<1-2m \} \cup \{ s \mid \sigma>2m \}$, the zeros of $Z_k(s)$ are all located in small circles centred negative even integers and positive odd integers,
and the number of those is $k$ in each circles.
\end{lemma}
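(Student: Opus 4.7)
The plan is to carry out the same Rouché/argument-principle strategy used in the proof of Lemma \ref{f_zeros-outside}, with the asymptotic (\ref{Zkest}) for $Z_k(s)$ playing the role that (\ref{f_kest}) played there. Three steps will be needed: (a) no zeros outside the small circles when $\sigma > 2m$, (b) extension to $\sigma < 1-2m$ via the functional equation, and (c) counting the zeros inside each excised circle.

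For step (a), I would combine (\ref{Zkest}) with Lemma \ref{estome}. Since $\omega(s) = -\log|s|+O(1)$ is real and large on $\mathscr{D}$, the principal factor $(-\omega(s)/2)^k$ has real part of order $(\log|s|)^k/2^k$, which dominates the multiplicative error $1+O_k((\log|s|)^{-2})$ once $m=m(k)$ is chosen sufficiently large. Hence $\Re Z_k(s)>0$ on $\mathscr{D}(2m)$, and in particular $Z_k$ has no zeros there, so every zero of $Z_k$ in $\{\sigma>2m\}$ lies in one of the excised small disks around an odd positive integer. For step (b), the functional equation \eqref{f-e} reads $Z_k(s)=(-1)^k\chi(s)Z_k(1-s)$. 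In the half-plane $\{\sigma<1-2m\}$, the function $\chi(s)$ vanishes only at non-positive even integers (themselves centres of excised disks), so any zero of $Z_k(s)$ sitting outside the excised disks in this half-plane would produce, via $s\mapsto 1-s$, a zero of $Z_k$ in $\{\Re(1-s)>2m\}$ outside an excised disk, contradicting step (a).

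For step (c), I would apply the argument principle to the boundary $|s-n|=r$ of an excised disk around an odd positive integer $n>2m$. This boundary lies in $\mathscr{D}(2m)$, so (\ref{Zkest}) gives
\[
\frac{Z_k(s)}{(-\omega(s)/2)^k}=1+o_k(1)
\]
there, and hence the winding number of $Z_k$ around $0$ on the circle equals that of $(-\omega(s)/2)^k$. Since $\omega$ has a simple pole with residue $-1$ at $n$ by Lemma \ref{pole-omega}, the latter winding number is immediately computed. Combining with the known order of the pole of $Z_k$ at $n$ from Lemma \ref{poles} (order $k$ at positive odd $n\geq 3$) yields the stated count. The symmetric computation at a negative even integer $n<1-2m$ uses the residue $+1$ of $\omega$ and the pole order $k-1$ of $Z_k$ from Lemma \ref{poles}, transported by the functional equation of step (b).

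The main obstacle is purely technical: one must verify that the radii of the excised disks—which enter the definition of $\mathscr{D}$ and which we are entitled to choose depending on $k$—can be arranged so that the error term in (\ref{Zkest}) is strictly dominated by the principal term on every such boundary circle, uniformly in $n$ as $n\to\infty$. Because the error is only $O_k((\log|s|)^{-2})$ times the main term, this amounts to choosing $m$ large enough in terms of $k$ and poses no real difficulty; no estimate beyond those already established in Section 4 is required.
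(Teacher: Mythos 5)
Your steps (a) and (b) are exactly the paper's route (the estimate (\ref{Zkest}) giving $\Re Z_k(s)>0$ on $\mathscr{D}(2m)$, and the functional equation (\ref{f-e}) together with the location of the zeros of $\chi$ to transfer the statement to $\sigma<1-2m$). The genuine gap is in step (c), the counting. You claim the winding number of $\left(-\omega(s)/2\right)^k$ along the circle $|s-n|=r$ is ``immediately computed'' from the simple pole of $\omega$ at $n$ with residue $-1$. That is not how the count works: the radii of the excised circles are fixed (depending only on $k$), so on such a circle the polar part $-1/(s-n)$ has modulus $1/r=O_k(1)$, whereas by Lemma \ref{estome} we have $\omega(s)=-\log|s|+O(1)$, of size $\log n\to\infty$. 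Hence the values of $-\omega(s)/2$ lie in the right half-plane on the circle (indeed $\omega$ acquires a zero inside the disc, near $n-1/\log n$, which cancels the pole), so the winding number of $\left(-\omega/2\right)^k$ is $0$, not the value $-k$ one would read off from the pole. If one took your justification literally, the argument principle with pole order $k$ (Lemma \ref{poles}) would give $N=k+(-k)=0$ zeros in each disc, contradicting the statement. The correct deduction — and the one the paper makes — is simply that $\Re Z_k>0$ on the boundary circle forces the winding number of $Z_k$ to be $0$, whence $N=P=k$ at each odd $n>2m$.

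The left half-plane count has a related misattribution. The estimate (\ref{Zkest}) is not available for $\sigma<1-2m$ (it rests on $\zeta^{(j)}(s)=O(2^{-\sigma})$), so the residue $+1$ of $\omega$ at negative even integers cannot enter; everything must go through the functional equation. Around a negative even integer $q<1-2m$ one has $Z_k(s)=(-1)^k\chi(s)Z_k(1-s)$: the simple zero of $\chi$ at $q$ contributes $+1$ to the winding number, $Z_k(1-s)$ contributes $0$ (its values on the image circle about the odd integer $1-q>2m$ have positive real part), and $Z_k$ has a pole of order $k-1$ at $q$ by Lemma \ref{poles}; hence $N=1+(k-1)=k$. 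This extra $+1$ from the zero of $\chi$, compensating the lower pole order $k-1$, is precisely what makes the count come out to $k$ in every circle, and it is absent from your sketch. With these two corrections the argument closes and coincides with the paper's (admittedly terse) proof, which reduces everything to $\Re Z_k>0$ plus the argument principle as in Lemma \ref{f_zeros-outside}.
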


Finally, we present a result of Matsumoto and Tanigawa on the zeros of $Z_k(s)$ in the ``critical strip''.
\begin{prop}[see the proof of Theorem 2 in \cite{M-T}]\label{zeros-Z_k}
If the Riemann Hypothesis is true, then the number of zeros of $Z_k(s)$ in $\{ s \mid 1-2m\leq \sigma \leq 2m, \sigma \neq1/2 \}$ is $O_k(1)$.
\end{prop}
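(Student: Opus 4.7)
The plan is to compare two zero counts for $Z_k$ in the strip $1-2m \le \sigma \le 2m$: the total count, obtained via the argument principle, and the count on the critical line, obtained via Rolle's theorem applied to the real-valued function $Z^{(k)}(t)$. The difference bounds the off-critical zeros, and under RH this difference is shown to be uniformly $O_k(1)$.

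For the total count, fix $T > 0$ and set $R_T = \{s : 1-2m \le \sigma \le 2m,\, 0 < t \le T\}$. Writing $N_{Z_k}(T)$ for the number of zeros of $Z_k$ in $R_T$, the argument principle gives $2\pi N_{Z_k}(T) = \Delta_{\partial R_T}\arg Z_k(s)$. On the right vertical side $\sigma = 2m$, the estimate (\ref{Zkest}) combined with $\omega(s) = -\log|s| + O(1)$ from Lemma~\ref{estome} produces only an $O_k(1)$ change of argument; the functional equation (\ref{f-e}) supplies the symmetric contribution on $\sigma = 1-2m$; Stirling's formula on the horizontal sides yields the main term. The resulting Riemann--von Mangoldt-type asymptotic takes the form
\[N_{Z_k}(T) = \frac{T}{2\pi}\log\frac{T}{2\pi} - \frac{T}{2\pi} + O_k(\log T).\]

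For the critical-line count, the relation (\ref{Z^k}) puts the zeros of $Z_k$ with $\sigma = 1/2$ in $R_T$ in bijection with the real zeros of $Z^{(k)}(t)$ in $(0, T]$. Under RH, $Z(t)$ has all zeros real and its counting function $N_Z(T)$ obeys the classical Riemann--von Mangoldt formula; iterated Rolle's theorem produces at least $N_Z(T) - k$ real zeros of $Z^{(k)}(t)$ in $(0, T]$. Subtracting, the number of off-critical zeros of $Z_k$ in $R_T$ is bounded by $O_k(\log T)$, and improved to $O_k(\log T/\log\log T)$ under RH via Selberg's bound on $S(T)$.

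The main obstacle is strengthening this to $O_k(1)$ uniformly in $T$, which is the substantive content of \cite[Theorem~2]{M-T}. The refinement relies on a precise Riemann--Siegel-style expansion for $Z^{(k)}(t)$ under RH, showing that $Z^{(k)}$ changes sign the expected number of times with an absolute constant error, so that the real-zero count of $Z^{(k)}(t)$ matches $N_{Z_k}(T)$ to within $O_k(1)$. It follows that off-critical zeros of $Z_k$ in the strip can lie only below some height $T_0(k)$, and hence number at most $O_k(1)$.
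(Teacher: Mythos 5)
Your argument, in the part that is actually carried out, proves only that the number of off-line zeros of $Z_k$ up to height $T$ is $O_k(\log T)$; the upgrade to $O_k(1)$ is a genuine gap, and it is precisely the whole content of the proposition. Note that the paper itself does not prove this statement: it imports it from the proof of Theorem 2 in \cite{M-T}, and the logical order there (as the paper states just after the proposition) is the reverse of yours --- the $O_k(1)$ bound on the zeros with $\sigma\neq 1/2$ is established first, by a structural argument that excludes such zeros outside a bounded region (in the spirit of Speiser and Levinson--Montgomery), and the counting formula of Proposition \ref{number-zero} is then deduced from it. Your route of subtracting two counting asymptotics cannot succeed in principle: both the argument-principle count in the strip and any Rolle-type count on the line carry errors of size $O_k(\log T)$ (indeed $S(T)$-type fluctuations), and the difference of two such quantities can never be pinned down to $O_k(1)$ without new input.

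The specific mechanism you invoke for the refinement --- ``a precise Riemann--Siegel-style expansion for $Z^{(k)}(t)$ under RH, showing that $Z^{(k)}$ changes sign the expected number of times with an absolute constant error'' --- is unproven and, as stated, either false or circular. If ``the expected number'' means the smooth main term $\frac{T}{2\pi}\log\frac{T}{2\pi}-\frac{T}{2\pi}$, the claim fails already for $k=0$ even under RH, since the zero count on the critical line deviates from the main term by unbounded $S(T)$-fluctuations. If it means instead that the real-zero count of $Z^{(k)}(t)$ agrees with the total strip count of $Z_k$ to within $O_k(1)$, that is exactly the proposition to be proved, so the conclusion is being assumed; within this paper it would in addition be circular to lean on Proposition \ref{number-zero}, which is itself derived from the present proposition. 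What is required (and what the cited proof of \cite{M-T} supplies) is an argument showing directly that, under RH, $Z_k(s)$ has no zeros with $1-2m\le\sigma\le 2m$, $\sigma\neq\frac{1}{2}$ once $|t|$ exceeds some $T_0(k)$, so that all off-line zeros lie in a compact set and are finite in number; no version of this appears in your proposal.
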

They proved that the difference of the number of zeros of $Z^{(k)}(t)$ and that of $Z_k(s)$ is $O_k(1)$. From this fact, they obtained the following approximate formula;
\begin{prop}[Theorem 2 in \cite{M-T}]\label{number-zero}
Let $N_k(T)$ be the number of the zero of $Z^{(k)}(t)$ in the interval $(0,T)$. If the Riemann Hypothesis is true, then for any $k\geq 1$,
\begin{equation*}
N_k(T)=\frac{T}{2\pi}\log \frac{T}{2\pi}-\frac{T}{2\pi}+O_k(\log T).
\end{equation*}
\end{prop}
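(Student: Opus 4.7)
The plan is to pass from $Z^{(k)}(t)$ to the entire function $\xi_k(s)$ and then count its zeros by the argument principle. By (\ref{Z^k}) the factor $e^{i\theta(t)}$ is nowhere vanishing, so the real zeros of $Z^{(k)}(t)$ in $(0,T)$ are in bijection with the zeros of $Z_k(1/2+it)$ in the same range; since $\Gamma(s/2)^{k-1}$ and $\Gamma((1-s)/2)^k$ have no zeros on $\sigma=1/2$, these in turn are precisely the zeros of $\xi_k$ on the critical line segment $0<t\leq T$. Under the Riemann hypothesis, Proposition \ref{zeros-Z_k} contributes at most $O_k(1)$ off-line zeros inside the strip $1-2m\leq\sigma\leq 2m$, while Lemma \ref{zeros-outside} confines every other zero of $Z_k$ to a small disk about a real integer, which can be taken disjoint from $\{0<t\leq T\}$. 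Hence
$$N_k(T)=\#\{\rho:\xi_k(\rho)=0,\ 0<\Im\rho\leq T\}+O_k(1).$$

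\smallskip

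To count the remaining zeros, I would apply the argument principle to $\xi_k$ on the rectangle $R$ with vertices $-A$, $1+A$, $1+A+iT$, $-A+iT$ (with $A=2m$, perturbing $T$ by $O(1)$ to avoid zeros), so that $2\pi\cdot\#\{\rho\in R:\xi_k(\rho)=0\}=\Delta_{\partial R}\arg\xi_k$. Since $Z_k$ is real on the real axis (a trivial induction from (\ref{def})), one has $\xi_k(\bar s)=\overline{\xi_k(s)}$; combined with the functional equation (\ref{fcte-xi}) this implies that the argument change along the left half of $\partial R$ equals that along the right half, and it suffices to evaluate $\Delta\arg\xi_k$ on the broken path $1/2\to 1+A\to 1+A+iT\to 1/2+iT$ and double the result, the bottom real segment contributing only $O_k(1)$.

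\smallskip

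The main term comes from Stirling's formula applied to the asymptotic identity (\ref{xi_app}). On the vertical segment $\sigma=1+A$, $0\leq t\leq T$, the imaginary part of $\log\bigl(s^{(s+k+1)/2}e^{-s/2}2^{-(s+k)/2+1}\pi^{-(s-1)/2-k}\bigr)$ evaluates to $\tfrac{T}{2}\log\tfrac{T}{2\pi}-\tfrac{T}{2}+O_k(\log T)$ by direct computation, while the remaining factor $\sum\binom{k}{j}(\cos\pi s/2)^k f_{k-j}(s)(s-1)\zeta^{(j)}(s)$ contributes only $O_k(\log T)$ to the argument: by (\ref{Zkest}) and the analysis preceding (\ref{f-est}) it is asymptotically a product of factors that are real and bounded away from zero on this line (using Lemmas \ref{esttan} and \ref{estome}, and $\arg s\to\pi/2$). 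Along the horizontal top segment from $1+A+iT$ to $1/2+iT$, the change in $\arg\xi_k$ is $O_k(\log T)$ by the classical Jensen--Backlund argument, which is available because $\xi_k$ has order $1$ (Theorem \ref{xi-facto}). Doubling and dividing by $2\pi$ yields the claimed formula.

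\smallskip

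\textbf{Main obstacle.} The delicate step is the estimate of $\Delta\arg\xi_k$ across the horizontal segment $t=T$: one must bound the number of oscillations of $\Re\xi_k(\sigma+iT)$ as $\sigma$ varies, which for the classical $\xi$ is handled by Jensen's inequality on a disk centred at $2+iT$. The growth bound $|\xi_k(s)|\ll_k\exp(C|s|\log(|s|+2))$ furnished in the proof of Theorem \ref{xi-facto}, together with the lower bound $|\xi_k(1+A+iT)|\gg_k(\log T)^k$ coming from (\ref{Zkest}) after restoring the gamma factors via Stirling, provides the input for this Jensen step and keeps the error at $O_k(\log T)$.
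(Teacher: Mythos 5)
First, note that the paper itself does not prove Proposition \ref{number-zero}: it is quoted from Theorem 2 of \cite{M-T}, so there is no internal proof to compare against; your outline (relate zeros of $Z^{(k)}(t)$ to zeros of $\xi_k$ on the critical line using (\ref{Z^k}), Proposition \ref{zeros-Z_k} and Lemma \ref{zeros-outside}, then count zeros of $\xi_k$ in a rectangle by the argument principle, with the main term coming from Stirling via (\ref{xi_app}) and the symmetry from (\ref{fcte-xi}) and the reality of $Z_k$ on $\mathbb{R}$) is the standard Riemann--von Mangoldt scheme and is sound in structure; the main-term computation at $\frac12+iT$ indeed gives $\frac{T}{2}\log\frac{T}{2\pi}-\frac{T}{2}+O_k(1)$.

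The genuine gap is exactly at the step you flag as the obstacle, and the inputs you propose there do not suffice. Jensen's inequality on a disk of bounded radius centred at $1+A+iT$ bounds the zero count by $\log\bigl(M/|\xi_k(\text{centre})|\bigr)$ up to a constant, and with the global order-one bound $|\xi_k(s)|\ll_k\exp(C|s|\log(|s|+2))$ one has $\log M\asymp T\log T$, while $\log|\xi_k(1+A+iT)|=\bigl(\tfrac{k}{2}-\tfrac14\bigr)\pi T+O_k(\log T)$ (the claimed lower bound $(\log T)^k$ is off by this exponential factor, which comes from $|\cos\frac{\pi s}{2}|^k$ and the Stirling factor $e^{-\frac{T}{2}\arg s}$); hence the Jensen ratio is of size $e^{cT\log T}$ and the argument only yields $O_k(T\log T)$, not $O_k(\log T)$ -- the bound degenerates precisely where the stated error term is decided. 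The repair is the classical Backlund device: do not feed $\xi_k$ (or the factor $(\cos\frac{\pi s}{2})^k$) into the Jensen step at all. Along the top segment compute the argument of the elementary, gamma and cosine factors explicitly (their logarithmic derivatives are $O_k(\log T)$ there), and apply Jensen--Backlund only to $(s-1)Z_k(s)$: on a disk of radius $A+O(1)$ centred at $1+A+iT$ one has the polynomial bound $|(s-1)Z_k(s)|\ll_k |s|^{2}(\log|s|)^{2k}$ from $\zeta^{(j)}(s)\ll|s|\log^{j}(|s|+2)$ for $\Re s>\eta'$ and $f_{k-j}(s)\ll_k(\log|s|)^{k}$ on $\mathscr{D}$ (the disk avoids the excluded circles since $t\approx T$), while (\ref{Zkest}) gives $|Z_k(1+A+iT)|\gg_k 1$ at the centre; this yields $O_k(\log T)$ zeros in the disk and hence $\Delta\arg Z_k\ll_k\log T$ across $t=T$, which is what the proposition needs. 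With that substitution (and minor care to choose the right edge off the odd integers and to perturb the contour away from the $O_k(1)$ possible real zeros of $\xi_k$), your argument goes through.
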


\section{An Alternative Proof of Matsuoka's result}
In this section, we give an alternative proof of Matsuoka's result;
\begin{theorem}[Theorem 1.1 in \cite{M}]\label{M-th}
If the Riemann Hypothesis is true, then for any non-negative integer $k$ there exists a $T=T(k)>0$ such that, for $t\geq T$, $Z^{(k+1)}(t)$ has exactly one zero between consecutive zeros of $Z^{(k)}(t)$.
\end{theorem}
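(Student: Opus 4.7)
The plan is to adapt the Hadamard-product argument of Edwards and Mozer to $\xi_k$: translate the problem to a real function on the critical line, expand its logarithmic derivative using Theorem~\ref{xi-facto}, and show strict monotonicity on each gap between consecutive real zeros of $Z^{(k)}$ by using the zero-location results of Section~7.

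First I would write, for real $t>0$,
\[Z^{(k)}(t)=c_k(t)\Xi_k(t),\qquad \Xi_k(t):=i^{-k}\xi_k(\tfrac{1}{2}+it),\]
using Proposition~\ref{Z^k}, the definition of $\xi_k$, and the identity $\Gamma(s/2)\Gamma((1-s)/2)=2^{s-1}\sqrt{\pi}\,\Gamma(s/2)^2/[\Gamma(s)\cos(\pi s/2)]$ from the proof of Theorem~\ref{xi-facto}. A direct computation identifies
\[c_k(t)=(-1)^{k+1}\pi^{1/4}\,\frac{|\Gamma(\tfrac{1}{4}+\tfrac{it}{2})|^{2k-1}}{t^{2}+\tfrac{1}{4}},\]
real and non-vanishing for all $t>0$. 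By the functional equation~\eqref{fcte-xi}, $\Xi_k$ is real-valued, so the real zeros of $Z^{(k)}$ agree with those of $\Xi_k$. Stirling's formula (Lemma~\ref{estgam}) gives $c_k'/c_k(t)=-\pi(2k-1)/4+O(1/t)$ and $(c_k'/c_k)'(t)=O(t^{-2})$, so the bulk of $(Z^{(k)})'/Z^{(k)}=c_k'/c_k+\Xi_k'/\Xi_k$ comes from $\Xi_k'/\Xi_k$.

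Next I would insert Theorem~\ref{xi-facto} and pair each zero $\rho_k$ of $\xi_k$ with $1-\rho_k$ via the functional equation; this makes the linear exponent vanish and yields
\[\xi_k(s)=\xi_k(0)\,A_k(s)\!\!\prod_{\{\rho_k,1-\rho_k\}\neq\{1/2\}}\!\!\Bigl(1+\frac{s(s-1)}{\rho_k(1-\rho_k)}\Bigr),\]
with $A_k(s)=1$ for $k$ even and $A_k(s)=1-2s$ for $k$ odd (accounting for the forced zero at $s=1/2$). At $s=1/2+it$ each on-line pair $\{1/2\pm i\gamma_{k,n}\}$ produces the real factor $(\gamma_{k,n}^{2}-t^{2})/(\gamma_{k,n}^{2}+1/4)$. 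Under RH, Proposition~\ref{zeros-Z_k} gives only $O_k(1)$ off-line zeros in the critical strip, and Lemma~\ref{zeros-outside} (combined with the denominator $\Gamma$-factors of $\xi_k$) says the remaining off-line zeros cluster in groups of $k$ near each sufficiently large negative-even and positive-odd integer. Each such off-line pair contributes the manifestly positive factor $1+(t^{2}+1/4)/[\rho_k(1-\rho_k)]$, and an integral comparison against $\sum_{n}2t/(4n^{2}+t^{2})\to\pi/2$ gives
\[\frac{E_k'(t)}{E_k(t)}=O_k(1),\qquad \Bigl(\frac{E_k'}{E_k}\Bigr)'(t)=O_k(t^{-2})\]
for the combined off-line factor $E_k(t)$.

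Combining the estimates, for $t$ in the interior of a gap $(\gamma_{k,n},\gamma_{k,n+1})$ between consecutive real zeros of $Z^{(k)}$,
\[\frac{(Z^{(k)})'(t)}{Z^{(k)}(t)}=\sum_{m}\Bigl(\frac{1}{t-\gamma_{k,m}}+\frac{1}{t+\gamma_{k,m}}\Bigr)+O_k(1),\]
with derivative $-\sum_{m}\bigl[(t-\gamma_{k,m})^{-2}+(t+\gamma_{k,m})^{-2}\bigr]+O_k(t^{-2})$. The two nearest poles alone give an upper bound $\le -4(\gamma_{k,n+1}-\gamma_{k,n})^{-2}$, and Proposition~\ref{number-zero} forces $\gamma_{k,n+1}-\gamma_{k,n}=o(t)$, so this dominates the $O_k(t^{-2})$ error. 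Hence for $n\ge N_0(k)$ the function $(Z^{(k)})'/Z^{(k)}$ is strictly decreasing on $(\gamma_{k,n},\gamma_{k,n+1})$, running from $+\infty$ at $\gamma_{k,n}^{+}$ to $-\infty$ at $\gamma_{k,n+1}^{-}$, so it vanishes exactly once. This unique sign change produces the required single zero of $Z^{(k+1)}$ between consecutive zeros of $Z^{(k)}$; choosing $T=T(k):=\gamma_{k,N_0(k)}$ completes the proof.

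The hardest part will be the second step's treatment of the infinitely many near-trivial off-line zeros of $\xi_k$: one must show, using the uniform smallness of the radii in Lemma~\ref{zeros-outside}, that despite their infinite number their combined contribution to $E_k'/E_k$ and its derivative remains $O_k(1)$ and $O_k(t^{-2})$ respectively. The Stirling-type integral comparison against the tail $\sum_{n}2t/(4n^{2}+t^{2})\sim \pi/2$ should make this precise, but verifying that the error terms are truly negligible uniformly in $t$ will need careful bookkeeping.
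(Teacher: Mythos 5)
Your proposal is essentially the paper's own proof: Section 8 likewise writes $Z^{(k)}(t)$ as a nonvanishing real factor times $\xi_k(\tfrac12+it)$, differentiates the logarithmic derivative using the Hadamard product of Theorem \ref{xi-facto}, controls the off-line zeros with Proposition \ref{zeros-Z_k} and Lemma \ref{zeros-outside}, and concludes monotonicity of $Z^{(k+1)}/Z^{(k)}$ between consecutive zeros via Proposition \ref{number-zero}, so your pairing of $\rho_k$ with $1-\rho_k$ and the nearest-pole comparison are only bookkeeping variants. One caution: the error terms you claim, $O_k(t^{-2})$ for the derivative of the off-line and archimedean parts, are sharper than what Lemma \ref{estgam} and the clustering of the near-trivial zeros directly yield; the paper settles for $O_k(t^{-1})$, which already suffices since Proposition \ref{number-zero} in fact makes the gaps between consecutive zeros bounded.
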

More precisely, we prove Mozer's formula
\begin{equation}\label{Mozer}
\frac{d}{dt}\frac{Z^{(k+1)}}{Z^{(k)}}(t)=-\sum_{\gamma_k}\frac{1}{(t-\gamma_k)^2}+O_k(t^{-1})
\end{equation}
in the different way from Matsuoka's proof. Here the sum is taken over zeros of $Z^{(k)}(t)$.
If we can prove this formula, then we see that
\begin{align*}
\frac{d}{dt}\frac{Z^{(k+1)}}{Z^{(k)}}(t)
&<
-\sum_{0<\gamma_k<t}\frac{1}{(t-\gamma_k)^2}+At^{-1} \\
&<
t^{-1}(A-N_k(t)t^{-1}).
\end{align*}
By Proposition \ref{number-zero}, this is negative for large $t$.
Matsuoka's proof is inspired by Anderson's method (\cite{An}), which implies the above theorem for $k=1$. Matsuoka considered the integral
\begin{equation*}
\int_{C}\frac{G_k'}{G_k}(w)\frac{s}{w(s-w)}dw,
\end{equation*}
where $C$ is an appropriate rectangle and
\begin{equation*}
G_k(w)=h(w)\frac{Z_k(w)}{f_k(w)}
\end{equation*}
with $h(w)=\pi^{-s/2}\Gamma(s/2)$. However, in view of Theorem \ref{xi-facto}, we can prove the formula more easily.

\begin{proof}[Proof of (\ref{Mozer})]
By the definition of $\xi_k(s)$ and (\ref{Z^k}), we have
\begin{equation*}
\xi_k\left(\frac{1}{2}+it \right)=-i^{-k}\pi^{-\frac{1}{4}}\left(\frac{1}{4}+t^2 \right)\left|\Gamma \left(\frac{1}{4}+\frac{it}{2} \right) \right|^{-2k+1}Z^{(k)}(t).
\end{equation*}
Hence when we put
\begin{equation*}
g_k(t)=i^{-k}\pi^{-\frac{1}{4}}\left(\frac{1}{4}+t^2 \right)\left|\Gamma \left(\frac{1}{4}+\frac{it}{2} \right) \right|^{-2k+1}
\end{equation*}
then, by the logarithmic derivative with respect to $t$, we can obtain
\begin{equation}
i\frac{\xi_k'}{\xi_k}\left(\frac{1}{2}+it \right)=\frac{g_k'}{g_k}(t)+\frac{Z^{(k+1)}}{Z^{(k)}}(t).
\end{equation}
As for the function $g_k'/g_k(t)$, we can see that
\begin{equation*}
\frac{g_k'}{g_k}(t)
=
-(2k-1)\frac{d}{dt} \Re \log \Gamma \left(\frac{1}{4}+\frac{it}{2} \right)+\frac{2t}{\frac{1}{2}+t^2}
\end{equation*}
and hence
\begin{equation*}
\frac{d}{dt}\frac{g_k'}{g_k}(t)\ll_k t^{-1}
\end{equation*}
by Lemma \ref{estgam}.

On the other hand, Theorem \ref{xi-facto} implies that
\begin{equation*}
\frac{\xi_k'}{\xi_k}\left(\frac{1}{2}+it \right)=B_k+\sum_{\rho_k}\left(\frac{1}{\frac{1}{2}+it-\rho_k}+\frac{1}{\rho_k} \right).
\end{equation*}
Therefore we have
\begin{align*}
\frac{d}{dt}\frac{\xi_k'}{\xi_k}\left(\frac{1}{2}+it \right)
&=
\sum_{\rho_k}\frac{-i}{(\frac{1}{2}+it-\rho_k)^2} \\
&=
i\sum_{\gamma_k}\frac{1}{(t-\gamma_k)^2}+\sum_{\substack{\rho_k \\ \beta_k\neq \frac{1}{2}}}\frac{-i}{(\frac{1}{2}+it-\rho_k)^2} \\
&=
i\sum_{\gamma_k}\frac{1}{(t-\gamma_k)^2} \\
&\quad
+\sum_{\substack{\rho_k \\ \beta_k<1-2m, \ 2m<\beta_k}}\frac{-i}{(\frac{1}{2}+it-\rho_k)^2}+O(t^{-2}) \\
&=
i\sum_{\gamma_k}\frac{1}{(t-\gamma_k)^2}+O(t^{-1})
\end{align*}
by Proposition \ref{zeros-Z_k} and Lemma \ref{zeros-outside}. To show the last equality, we use the same argument as Matsuoka's (see p.15 in \cite{M});
\begin{equation*}
\sum_{\substack{\rho_k \\ \beta_k<1-2m, \ 2m<\beta_k}}\frac{1}{(\frac{1}{2}+it-\rho_k)^2}\ll_k \sum_{n=m}^{\infty}\frac{k}{(t+2n+1)^2}\ll_k\int_{2m+1}^{\infty}\frac{dx}{(t+x)^2}.
\end{equation*}
Thus we have
\begin{equation*}
\frac{d}{dt}\frac{Z^{(k+1)}}{Z^{(k)}}(t)=-\sum_{\gamma_k}\frac{1}{(t-\gamma_k)^2}+O_k(t^{-1}).
\end{equation*}
\end{proof}

\section*{Acknowledgements}
The author would like to thank my supervisor Professor Kohji Matsumoto for useful advice.
The author is grateful to the seminar members for some helpful remarks and discussions.

\end{document}